\newtheorem{corollary}{Corollary}
\newtheorem{lemma}{Lemma}
\newtheorem{proposition}{Proposition}
\newtheorem{theorem}{Theorem}
\theoremstyle{definition}
\newtheorem{definition}{Definition}
\theoremstyle{remark}
\newtheorem{remark}{Remark}
\DeclareMathOperator{\aut}{Aut}
\begin{document}

\title{On the hom-associative Weyl algebras}
\author{Per B\"ack}
\address[Per B\"ack]{Division of Mathematics and Physics, The School of Education, Culture and Communication, M\"alar\-dalen  University,  Box  883,  SE-721  23  V\"aster\r{a}s, Sweden}
\email[corresponding author]{per.back@mdh.se}

\author{Johan Richter}
\address[Johan Richter]{Department of Mathematics and Natural Sciences, Blekinge Institute of Technology, SE-371 79 Karlskrona, Sweden}
\email{johan.richter@bth.se}

\subjclass[2020]{17B61, 17D30}
\keywords{Dixmier conjecture, hom-associative Ore extensions, hom-associative Weyl algebras, formal hom-associative deformations, formal hom-Lie deformations}

\begin{abstract}
The first (associative) Weyl algebra is formally rigid in the classical sense. In this paper, we show that it can however be formally deformed in a nontrivial way when considered as a so-called hom-associative algebra, and that this deformation preserves properties such as the commuter, while deforming others, such as the center, power associativity, the set of derivations, and some commutation relations. We then show that this deformation induces a formal deformation of the corresponding Lie algebra into what is known as a hom-Lie algebra, when using the commutator as bracket. We also prove that all homomorphisms between any two purely hom-associative Weyl algebras are in fact isomorphisms. In particular, all endomorphisms are automorphisms in this case, hence proving a hom-associative analogue of the Dixmier conjecture to hold true.
\end{abstract}

\maketitle

\section{Introduction}
The study of \emph{hom-associative algebras} has its origins in \emph{hom-Lie algebras}, the latter proposed by Hartwig, Larsson, and Silvestrov~\cite{HLS06} as a generic framework to describe deformations of Lie algebras obeying a generalized Jacobi identity, the latter now twisted by a \emph{hom}omorphism; hence the name. Hom-associative algebras, introduced by Makhlouf and Silvestrov~\cite{MS08}, now play the same role as associative algebras do for Lie algebras; equipping a hom-associative algebra with the commutator as bracket give rise to a hom-Lie algebra. Just as the Jacobi identity in the latter algebras is twisted, the same holds true for the associativity condition in the former. In particular may hom-associative algebras be seen to include associative algebras and general non-associative algebras in the following way: when the map twisting this condition is the identity map, one recovers the associativity condition, and when equal to the zero map, this condition becomes null. We define the purely hom-associative case to be the one in which this map is not a multiple of the identity map. (Note, however, that a purely hom-associative algebra can happen to be associative as well.)

The first (associative) Weyl algebra may be exhibited as an \emph{Ore extension}, or a \emph{non-commutative polynomial ring} as Ore extensions were first named by Ore when he introduced them~\cite{Ore33}. \emph{Non-associative Ore extensions} were later introduced by Nystedt, \"Oinert, and Richter in the unital case~\cite{NOR18}, and then generalized to the non-unital, hom-as\-so\-cia\-tive setting by Silvestrov and the authors~\cite{BRS18}. The authors further developed this theory in~\cite{BR18}, introducing a Hilbert's basis theorem for unital, non-associative and hom-associative Ore extensions. 

The first Weyl algebra is formally rigid in the classical sense of Gerstenhaber who introduced formal deformation theory for associative algebras and rings in the seminal paper~\cite{Ger64}. However, as described above, any associative algebra is a hom-associative algebra with twisting map equal to the identity map, and we show in this paper that as such, it may be formally deformed in a nontrivial way. This results in the \emph{hom-associative Weyl algebras}, first introduced in~\cite{BRS18} along with hom-associative versions of the quantum plane and the universal enveloping algebra of the two-dimensional non-abelian Lie algebra, the latter two which also turned out to be formal deformations of their associative counterparts~\cite{Bac18}. We also show that the formal deformation of the first Weyl algebra into the hom-associative Weyl algebras induce a formal deformation of the corresponding Lie algebra into hom-Lie algebras, when using the commutator as bracket. They can, more formally, be described as \emph{one-parameter formal hom-associative deformations} and \emph{one-parameter formal hom-Lie deformations}, respectively; two notions that have been introduced by Makhlouf and Silvestrov earlier~\cite{MS10}. Furthermore, we see that the former deformation preserves some properties, such as the commuter, while deforming others such as the center, power associativity, the set of derivations, and some commutation relations. The perhaps most interesting fact we are able to prove, however, is that all homomorphisms between any two purely hom-associative Weyl algebras are in fact isomorphisms. In this case, all endomorphisms are therefore automorphisms, hence a hom-associative analogue of the Dixmier conjecture holds true, the latter a still unsolved conjecture that has its origins in a question raised by Dixmier in~\cite{Dix68} (cf. 11. Problèmes). Tsuchimoto~\cite{Tsuchimoto05} and Kanel-Belov and Kontsevich~\cite{KBK07} have moreover been able to prove, independently, that the Dixmier conjecture is stably equivalent to the more famous Jacobian conjecture.  

The paper is organized as follows:

\autoref{sec:prel} focuses on preliminaries on non-associative algebras (\autoref{subsec:general-algebra}), hom-associative algebras and hom-Lie algebras (\autoref{subsec:hom}), non-unital, hom-associative Ore extensions (\autoref{subsec:hom-ore}), the first Weyl algebra (\autoref{subsec:weyl-algebra}), and the hom-associative Weyl algebras (\autoref{subsec:hom-weyl}).

\autoref{sec:morph-assoc-comm} contains results on some of the standard properties of the hom-asso\-cia\-tive Weyl algebras: it is shown that they contain no zero divisors (\autoref{cor:zero-div}), that their commuter is equal to the ground field (\autoref{prop:commuter-of-hom-weyl}), that the first Weyl algebra is the only power associative hom-associative Weyl algebra (\autoref{prop:power-assoc}), and moreover, all their derivations are described (\autoref{cor:derivations}). It is further shown that all homomorphisms between any two purely hom-associative Weyl algebras are isomorphisms of a certain type (\autoref{prop:morphisms}), this implying that all endomorphisms therefore are automorphisms in this case (\autoref{cor:hom-dixmier}).

\autoref{sec:deform} gives a brief review of one-parameter formal hom-associative deformations and one-parameter formal hom-Lie deformations. It is then shown that the hom-associative Weyl algebras are a one-parameter formal hom-associative deformation of the first Weyl algebra (\autoref{prop:weyl-deform}), inducing a one-parameter formal hom-Lie deformation of the Lie algebra when using the commutator as bracket (\autoref{prop:weyl-lie-deform}).  

\newpage

\section{Preliminaries}\label{sec:prel}
Throughout this paper, we denote by $\mathbb{N}$ the nonnegative integers.

\subsection{Non-associative algebras}\label{subsec:general-algebra}
By a \emph{non-as\-so\-cia\-tive algebra} $A$ over an associative, commutative, and unital ring $R$, we mean an $R$-algebra which is not necessarily associative. Furthermore, $A$ is called \emph{unital} if there exists an element $1_A\in A$ such that for all $a\in A$, $a\cdot 1_A=1_A\cdot a=a$, and \emph{non-unital} if there does not necessarily exist such an element. For a non-associative and non-unital algebra $A$, the \emph{commutator} $[\cdot,\cdot]\colon A\times A\to A$ is defined by $[a,b]:=a\cdot b-b\cdot a$ for arbitrary $a,b\in A$, and the \emph{commuter} of $A$, written $C(A)$, as the set $C(A):=\{a\in A\colon [a,b]=0,\ b\in A\}$. The \emph{associator} $(\cdot,\cdot,\cdot)\colon A\times A\times A\to A$ is defined by $(a,b,c)=(a\cdot b)\cdot c-a\cdot (b\cdot c)$ for arbitrary elements $a,b,c\in A$, and the \emph{left}, \emph{middle}, and \emph{right nuclei} of $A$, denoted by $N_l(A), N_m(A),$ and $N_r(A)$, respectively, as the sets $N_l(A):=\{a\in A\colon (a,b,c)=0,\ b,c\in A\}$, $N_m(A):=\{b\in A\colon (a,b,c)=0,\ a,c\in A\}$, and $N_r(A):=\{c\in A\colon (a,b,c)=0,\ a,b\in A\}$. The \emph{nucleus} of $A$, written $N(A)$, is defined as the set $N(A):=N_l(A)\cap N_m(A)\cap N_r(A)$. The \emph{center} of $A$, denoted by $Z(A)$, is the intersection of the commuter and the nucleus, $Z(A):=C(A)\cap N(A)$. If the only two-sided ideals in $A$ are the zero ideal and $A$ itself, $A$ is called \emph{simple}.  A way to measure the non-associativity of $A$ can be done by using the associator: $A$ is called \emph{power associative} if $(a,a,a)=0$, \emph{left alternative} if $(a,a,b)=0$, \emph{right alternative} if $(b,a,a)=0$, \emph{flexible} if $(a,b,a)=0$, and \emph{associative} if $(a,b,c)=0$ for all $a,b,c\in A$. An $R$-linear map $\delta\colon A\to A$ is called a \emph{derivation} if for any $a,b\in A$, $\delta(a\cdot b)=\delta(a)\cdot b+a\cdot\delta(b)$. If $A$ is associative, then all maps of the form $[a,\cdot]\colon A\to A$ for an arbitrary $a\in A$ are derivations called \emph{inner derivations}. If $A$ is not associative, such a map need not be a derivation, however. At last, recall that $A$ \emph{embeds} into a non-associative and non-unital algebra $B$ if there is an injective morphism from $A$ to $B$, the idea being that $A$ now may be seen as a subalgebra of $B$.

\subsection{Hom-associative algebras and hom-Lie algebras}\label{subsec:hom}
This section is devoted to restating some basic definitions and general facts concerning hom-associative algebras and hom-Lie algebras. Though hom-associative algebras as first introduced in \cite{MS08} and hom-Lie algebras in \cite{HLS06} were defined by starting from vector spaces, we take a slightly more general approach here, following the conventions in e.g. \cite{Bac18, BR18, BRS18}, starting from modules; most of the general theory still hold in this latter case, however.

\begin{definition}[Hom-associative algebra]\label{def:hom-assoc-algebra} A \emph{hom-associative algebra} over an associative, commutative, and unital ring $R$, is a triple $(M,\cdot,\alpha)$ consisting of an $R$-module $M$, a binary operation $\cdot\colon M\times M\to M$ linear over $R$ in both arguments, and an $R$-linear map $\alpha\colon M\to M$, satisfying, for all $a,b,c\in M$, $\alpha(a)\cdot(b\cdot c)=(a\cdot b)\cdot\alpha(c)$.
\end{definition}

Since $\alpha$ twists the associativity, it is referred to as the \emph{twisting map}, and unless otherwise stated, it is understood that $\alpha$ without any further reference will always denote the twisting map of a hom-associative algebra. A \emph{multiplicative} hom-associative algebra is an algebra where the twisting map is multiplicative, i.e. an $R$-algebra homomorphism.

\begin{remark}
A hom-associative algebra over $R$ is in particular a non-unital, non-associative $R$-algebra, and in case $\alpha=\mathrm{id}_M$, a non-unital, associative $R$-algebra. In case $\alpha=0_M$, the hom-associative condition becomes null, and hom-associative algebras can thus be considered as generalizations of both associative and non-associative algebras.
\end{remark}

\begin{definition}[Hom-associative ring]A \emph{hom-associative ring} is a hom-associative algebra over the integers.
\end{definition}

\begin{definition}[Weakly unital hom-associative algebra]\label{def:weak-hom}
Let $A$ be a hom-associative algebra. If for all $a\in A$, $e_l\cdot a=\alpha(a)$ for some $e_l\in A$, we say that $A$ is \emph{weakly left unital} with \emph{weak left unit} $e_l$. In case $a\cdot e_r=\alpha(a)$ for some $e_r\in A$, $A$ is called \emph{weakly right unital} with \emph{weak right unit} $e_r$. If there is an $e\in A$ which is both a weak left and a weak right unit, $e$ is called a \emph{weak unit}, and $A$ \emph{weakly unital}.
\end{definition}

\begin{remark}The notion of a weak unit can thus be seen as a weakening of that of a unit. A weak unit, when it exists, need not be unique.
\end{remark}

\begin{proposition}[\cite{Yau09, FG09}]\label{prop:star-alpha-mult} Let $A$ be a unital, associative algebra with unit $1_A$, $\alpha$ an algebra endomorphism on $A$, and define $*\colon A\times A\to A$ by $a* b:=\alpha(a\cdot b)$ for all $a,b\in A$. Then $(A,*,\alpha)$ is a weakly unital hom-associative algebra with weak unit $1_A$.
\end{proposition}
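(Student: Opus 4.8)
The plan is to verify directly each clause in the definition of a (weakly unital) hom-associative algebra, since the statement is essentially an unwinding of definitions once one exploits that $\alpha$ is \emph{multiplicative}. First I would record that $*$ is $R$-linear in each argument: this is immediate because $a*b=\alpha(a\cdot b)$ is the composite of the $R$-bilinear multiplication $\cdot$ with the $R$-linear map $\alpha$, so bilinearity of $*$ follows from bilinearity of $\cdot$ together with $R$-linearity of $\alpha$. The map $\alpha$ is $R$-linear by hypothesis (it is an algebra endomorphism), so $(A,*,\alpha)$ is at least a well-formed candidate triple.

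The core step is the twisted associativity identity $\alpha(a)*(b*c)=(a*b)*\alpha(c)$. I would expand both sides and reduce each to a common expression. On the left,
\[
\alpha(a)*(b*c)=\alpha\bigl(\alpha(a)\cdot\alpha(b\cdot c)\bigr)=\alpha\bigl(\alpha(a\cdot(b\cdot c))\bigr)=\alpha^2\bigl(a\cdot(b\cdot c)\bigr),
\]
where the middle equality uses that $\alpha$ is multiplicative, i.e.\ $\alpha(a)\cdot\alpha(x)=\alpha(a\cdot x)$ with $x=b\cdot c$. Symmetrically, on the right,
\[
(a*b)*\alpha(c)=\alpha\bigl(\alpha(a\cdot b)\cdot\alpha(c)\bigr)=\alpha\bigl(\alpha((a\cdot b)\cdot c)\bigr)=\alpha^2\bigl((a\cdot b)\cdot c\bigr).
\]
Since $\cdot$ is associative in $A$, we have $a\cdot(b\cdot c)=(a\cdot b)\cdot c$, and applying $\alpha^2$ to both shows the two displayed expressions agree, establishing the hom-associative axiom.

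Finally I would check the weak unit property using the unit $1_A$ of $A$: for any $a\in A$,
\[
1_A*a=\alpha(1_A\cdot a)=\alpha(a),\qquad a*1_A=\alpha(a\cdot 1_A)=\alpha(a),
\]
so $1_A$ is simultaneously a weak left and weak right unit in the sense of \autoref{def:weak-hom}, whence $(A,*,\alpha)$ is weakly unital with weak unit $1_A$.

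There is no genuine obstacle here; the only point that must not be glossed over is that the hypothesis ``$\alpha$ is an algebra endomorphism'' is used in its full strength. Linearity alone gives bilinearity of $*$ and the weak-unit computation, but it is precisely \emph{multiplicativity} of $\alpha$ that allows pulling $\alpha(a)\cdot\alpha(x)$ back to $\alpha(a\cdot x)$ in the twisted associativity computation; without it the two sides would not collapse to the common value $\alpha^2(a\cdot b\cdot c)$. Thus the single substantive idea is the reduction of both sides of the hom-associative identity to $\alpha^2$ applied to the (associatively unambiguous) triple product.
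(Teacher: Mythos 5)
Your proof is correct. Note that the paper itself does not prove this proposition at all---it is quoted from the literature (Yau, and Fregier--Gohr), which is why it carries the citation in its header. Your argument is the standard direct verification one would find there: bilinearity of $*$ from bilinearity of $\cdot$ and linearity of $\alpha$, reduction of both sides of the hom-associativity identity to $\alpha^2\bigl(a\cdot b\cdot c\bigr)$ via multiplicativity of $\alpha$ and associativity of $\cdot$, and the one-line weak-unit check; you also correctly isolate multiplicativity of $\alpha$ as the one hypothesis doing real work.
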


From now on we will always refer to the construction above when writing $*$.

\begin{definition}[Hom-Lie algebra]\label{def:hom-lie} A \emph{hom-Lie algebra} over an associative, commutative, and unital ring $R$, is a triple $(M, [\cdot,\cdot], \alpha)$ where $M$ is an $R$-module, $\alpha\colon M\to M$ a linear map called the \emph{twisting map}, and $[\cdot,\cdot]\colon M\times M\to M$ a map called the \emph{hom-Lie bracket}, satisfying the following axioms for all $a,b,c\in M$ and $r,s\in R$:
\begin{align*}
[ra+sb,c]=r[a,c]+s[b,c], \quad [a,rb+sc]=r[a,b]+s[a,c],\quad\text{(bilinearity)},\\
[a,a]=0,\quad\text{(alternativity)},\\
\left[\alpha(a),[b,c]\right]+\left[\alpha(c),[a,b]\right]+\left[\alpha(b),[c,a]\right]=0,\quad\text{(hom-Jacobi identity)}.\label{eq:hom-jacobi}
\end{align*}
\end{definition}
As in the case of Lie algebra, we immediately get anti-commutativity from the bilinearity and alternativity by calculating $0=[a+b,a+b]=[a,a]+[a,b]+[b,a]+[b,b]=[a,b]+[b,a]$, so $[a,b]=-[b,a]$ holds for all $a$ and $b$ in a hom-Lie algebra as well. Unless $R$ has characteristic two, anti-commutativity also implies alternativity, since $[a,a]=-[a,a]$ for all $a$.

\begin{remark}If $\alpha=\mathrm{id}_M$ in \autoref{def:hom-lie}, we get the definition of a Lie algebra. Hence the notion of a hom-Lie algebra can be seen as a generalization of that of a Lie algebra.
\end{remark}

\begin{proposition}[\cite{MS08}]\label{prop:commutator-construction} Let $(M,\cdot,\alpha)$ be a hom-associative algebra with commutator $[\cdot,\cdot]$. Then $(M,[\cdot,\cdot],\alpha)$ is a hom-Lie algebra.
\end{proposition}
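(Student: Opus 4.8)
The plan is to verify the three defining axioms of \autoref{def:hom-lie} for the triple $(M,[\cdot,\cdot],\alpha)$, where $[a,b]=a\cdot b-b\cdot a$ is the commutator. Two of these are immediate. Bilinearity of $[\cdot,\cdot]$ follows directly from the $R$-bilinearity of the multiplication $\cdot$ guaranteed by \autoref{def:hom-assoc-algebra}, and alternativity is equally trivial since $[a,a]=a\cdot a-a\cdot a=0$ for every $a\in M$. Hence all the substance lies in the hom-Jacobi identity.

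To establish that identity, I would expand each of its three cyclic terms by the definition of the commutator. For instance, $[\alpha(a),[b,c]]$ becomes $\alpha(a)\cdot(b\cdot c)-\alpha(a)\cdot(c\cdot b)-(b\cdot c)\cdot\alpha(a)+(c\cdot b)\cdot\alpha(a)$, and similarly for $[\alpha(c),[a,b]]$ and $[\alpha(b),[c,a]]$. This yields twelve signed products in total, each a product of three factors exactly one of which is twisted by $\alpha$, and each of the form $\alpha(x)\cdot(y\cdot z)$ or $(x\cdot y)\cdot\alpha(z)$ for suitable $x,y,z\in\{a,b,c\}$.

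The key step is then to apply the hom-associativity relation $\alpha(x)\cdot(y\cdot z)=(x\cdot y)\cdot\alpha(z)$ from \autoref{def:hom-assoc-algebra}, which lets one pass freely between these two forms. After rewriting, I expect the twelve terms to cancel in six pairs: for example, $\alpha(a)\cdot(b\cdot c)=(a\cdot b)\cdot\alpha(c)$ cancels against the term $-(a\cdot b)\cdot\alpha(c)$ arising in the expansion of $[\alpha(c),[a,b]]$, and the remaining five pairs match up analogously under cyclic permutation of $a,b,c$. The main obstacle is purely bookkeeping: one must pair each of the twelve products with its cancelling partner and check that a single application of hom-associativity effects each match. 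Carrying this out shows the left-hand side of the hom-Jacobi identity vanishes, which completes the verification.
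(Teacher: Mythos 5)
Your proof is correct. The paper itself gives no proof of this proposition -- it is quoted from \cite{MS08} -- and your argument (bilinearity and alternativity immediate from bilinearity of $\cdot$; expanding the three cyclic hom-Jacobi terms into six products of the form $\alpha(x)\cdot(y\cdot z)$ and six of the form $(x\cdot y)\cdot\alpha(z)$, then applying hom-associativity once to each term of the first kind so that the twelve terms cancel in six pairs) is exactly the standard verification, and the cancellation works as you claim.
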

Note that when $\alpha$ is the identity map in the above proposition, one recovers the classical construction of a Lie algebra from an associative algebra. We refer to the above construction as the \emph{commutator construction}.

\subsection{Non-unital, hom-associative Ore extensions}\label{subsec:hom-ore}
Here, we give some preliminaries from the theory of non-unital, hom-associative Ore extensions, as introduced in~\cite{BRS18}. First, if $R$ is a non-unital, non-associative ring, a map $\beta\colon R\to R$ is called \emph{left $R$-additive} if for all $r,s,t\in R$, we have $r\cdot\beta(s+t)=r\cdot \beta(s)+r\cdot\beta(t)$. If given two left $R$-additive maps $\delta$ and $\sigma$ on a non-unital, non-associative ring $R$, by a \emph{non-unital, non-associative Ore extension} of $R$, written $R[x;\sigma,\delta]$, we mean the set of formal sums $\sum_{i\in\mathbb{N}}r_ix^i$ where finitely many $r_i\in R$ are non-zero, equipped with the following addition:
\begin{equation}
\sum_{i\in\mathbb{N}}r_ix^i+\sum_{i\in\mathbb{N}}s_ix^i=\sum_{i\in\mathbb{N}}(r_i+s_i)x^i,\quad r_i,s_i\in R,
\end{equation}
and the following multiplication, first defined on \emph{monomials} $rx^m$ and $sx^n$ where $m,n\in\mathbb{N}$:
\begin{equation}
rx^m\cdot sx^n=\sum_{i\in\mathbb{N}}\left(r\cdot\pi_i^m(s\right))x^{i+n}\label{eq:ore-mult}
\end{equation}
and then extended to arbitrary \emph{polynomials} $\sum_{i\in\mathbb{N}}r_ix^i$ in $R[x;\sigma,\delta]$ by imposing distributivity. The functions $\pi_i^m\colon R\to R$ are defined as the sum of all $\binom{m}{i}$ compositions of $i$ instances of $\sigma$ and $m-i$ instances of $\delta$, so $\pi_2^3=\sigma\circ\sigma\circ\delta+\sigma\circ\delta\circ\sigma+\delta\circ\sigma\circ\sigma$ and $\pi_0^0=\mathrm{id}_R$. Whenever $i<0$, or $i>m$, we put $\pi_i^m\equiv 0$. That this really gives an extension of the ring $R$, as suggested by the name, can now be seen by the fact that $rx^0\cdot sx^0=\sum_{i\in\mathbb{N}}(r\cdot \pi_i^0(s))x^{i+0}=(r\cdot\pi_0^0(s))x^0=(r\cdot s)x^0$, and similarly $rx^0+sx^0=(r+s)x^0$ for any $r,s\in R$. Hence the isomorphism $r\mapsto rx^0$ embeds $R$ into $R[x;\sigma,\delta]$. We shall only be concerned with the case $\sigma=\mathrm{id}_R$, however, in which case \eqref{eq:ore-mult} simplifies to
\begin{equation}
rx^m\cdot sx^n=\sum_{i\in\mathbb{N}}\binom{m}{i}\left(r\cdot\delta^{m-i}(s)\right)x^{i+n}.
\end{equation}
Starting with a non-unital, non-associative ring $R$ equipped with two left $R$-additive maps $\delta$ and $\sigma$ and some additive map $\alpha\colon R\to R$, we \emph{extend $\alpha$ homogeneously} to $R[x;\sigma, \delta]$ by putting $\alpha(rx^m)=\alpha(r)x^m$ for all $rx^m\in R[x;\sigma,\delta]$, imposing additivity. If $\alpha$ is further assumed to be multiplicative and to commute with $\delta$ and $\sigma$, we can turn a non-unital (unital), associative Ore extension into a non-unital (weakly unital), hom-associative Ore extension by using this extension, as the following proposition demonstrates:

\begin{proposition}[\cite{BRS18}]\label{prop:hom*ore} Let $R[x;\sigma,\delta]$ be a non-unital, associative Ore extension of a non-unital, associative ring $R$. Let $\alpha\colon R\to R$ be a ring endomorphism that commutes with $\delta$ and $\sigma$, and extend $\alpha$ homogeneously to $R[x;\sigma,\delta]$. Then $\left(R[x;\sigma,\delta],*,\alpha\right)$ is a multiplicative, non-unital, hom-associative Ore extension.
\end{proposition}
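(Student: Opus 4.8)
The plan is to reduce everything to the observation, essentially Yau's twisting (\autoref{prop:star-alpha-mult}), that once the homogeneously extended $\alpha$ is an \emph{algebra endomorphism} of the associative Ore extension $(R[x;\sigma,\delta],\cdot)$, the twisted product $a*b=\alpha(a\cdot b)$ is automatically hom-associative. Hence the real work lies in verifying that the homogeneous extension of $\alpha$ is multiplicative with respect to $\cdot$; the hom-associativity, the multiplicativity of $\alpha$ with respect to $*$, and the non-unitality will then follow formally.

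First I would establish multiplicativity of $\alpha$ on $R[x;\sigma,\delta]$. Since $\alpha$ is additive and $\cdot$ is biadditive, it suffices to check this on monomials. The key subclaim is that $\alpha$ commutes with each $\pi_i^m$: because $\alpha\circ\sigma=\sigma\circ\alpha$ and $\alpha\circ\delta=\delta\circ\alpha$, a straightforward induction on the length of a composition shows that $\alpha$ commutes with every composite of $\sigma$'s and $\delta$'s, and by additivity of $\alpha$ therefore with the sum $\pi_i^m$. Combining this with $\alpha$ being a ring endomorphism of $R$, I would compute, term by term in \eqref{eq:ore-mult},
\[
\alpha(rx^m\cdot sx^n)=\sum_{i\in\mathbb{N}}\alpha\bigl(r\cdot\pi_i^m(s)\bigr)x^{i+n}=\sum_{i\in\mathbb{N}}\bigl(\alpha(r)\cdot\pi_i^m(\alpha(s))\bigr)x^{i+n}=\alpha(rx^m)\cdot\alpha(sx^n),
\]
and then extend to arbitrary polynomials by additivity. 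This shows that $\alpha$ is an algebra endomorphism of the associative Ore extension.

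With multiplicativity in hand, the remaining points are formal. For hom-associativity I would use multiplicativity of $\alpha$ together with associativity of $\cdot$ to get $\alpha(a)*(b*c)=\alpha^2\bigl(a\cdot(b\cdot c)\bigr)=\alpha^2\bigl((a\cdot b)\cdot c\bigr)=(a*b)*\alpha(c)$. For multiplicativity of $\alpha$ with respect to $*$, the same ingredient gives $\alpha(a*b)=\alpha^2(a\cdot b)=\alpha(a)*\alpha(b)$, so the resulting hom-associative algebra is multiplicative. The underlying set of formal sums, its grading, and the embedding of $R$ are untouched by passing from $\cdot$ to $*$, so $(R[x;\sigma,\delta],*,\alpha)$ remains an Ore extension in the sense defined above, and it inherits non-unitality from $R[x;\sigma,\delta]$. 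Note that although \autoref{prop:star-alpha-mult} is phrased for unital algebras, only its weak-unit conclusion uses the unit; the hom-associativity argument it encapsulates is exactly the one above and needs no unit, so it transfers verbatim to the non-unital setting.

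I expect the main obstacle to be purely bookkeeping: pinning down that $\alpha$ commutes with $\pi_i^m$ and justifying the term-by-term manipulation of \eqref{eq:ore-mult} (in particular that the relevant maps are additive enough for the sums to be rearranged). Once that commutation is secured, nothing else in the proof is more than a two-line formal computation.
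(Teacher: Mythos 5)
First, a point of context: the paper does not prove this proposition itself --- it is imported from~\cite{BRS18} --- so your proposal can only be measured against the proof there and against what the paper's preliminaries clearly intend. Your main line of argument is exactly that intended route: establish that the homogeneous extension of $\alpha$ is an algebra endomorphism of the associative Ore extension, then obtain hom-associativity and multiplicativity of $\alpha$ with respect to $*$ by the Yau twist. Your computation is correct: $\alpha$ commutes with every composition of $\sigma$'s and $\delta$'s by induction on length, hence with each $\pi_i^m$ by additivity, and together with $\alpha$ being a ring endomorphism of $R$ this gives $\alpha(rx^m\cdot sx^n)=\alpha(rx^m)\cdot\alpha(sx^n)$, extended to polynomials by additivity. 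Your observation that the unital hypothesis in \autoref{prop:star-alpha-mult} is used only for the weak-unit conclusion, so the hom-associativity and multiplicativity computations transfer verbatim to the non-unital setting, is also correct and worth stating explicitly.

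The one genuine weak point is your justification of the word \emph{Ore extension} in the conclusion. You argue that since ``the underlying set of formal sums, its grading, and the embedding of $R$ are untouched,'' the triple $(R[x;\sigma,\delta],*,\alpha)$ remains an Ore extension. That reasoning is not valid: being an Ore extension is a property of the \emph{multiplication} --- it must be given by formula \eqref{eq:ore-mult} over some base ring and left-additive maps --- not a property of the underlying additive group of formal sums, and $*$ is certainly not given by \eqref{eq:ore-mult} over $(R,\cdot)$. What saves the claim is that $*$ is the Ore product over the Yau twist of the base ring: restricting $*$ to degree zero gives $r\star s:=\alpha(r\cdot s)$ on $R$; the maps $\sigma$ and $\delta$ are still left additive with respect to $\star$, since $r\star\delta(s+t)=\alpha(r\cdot\delta(s)+r\cdot\delta(t))=r\star\delta(s)+r\star\delta(t)$ and similarly for $\sigma$; and on monomials
\begin{equation*}
rx^m * sx^n=\alpha\left(\sum_{i\in\mathbb{N}}\left(r\cdot\pi_i^m(s)\right)x^{i+n}\right)=\sum_{i\in\mathbb{N}}\alpha\left(r\cdot\pi_i^m(s)\right)x^{i+n}=\sum_{i\in\mathbb{N}}\left(r\star\pi_i^m(s)\right)x^{i+n},
\end{equation*}
which is exactly \eqref{eq:ore-mult} for the ring $(R,\star)$ with the same maps $\sigma,\delta$ (note this step uses only homogeneity and additivity of $\alpha$, not its multiplicativity). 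With this verification added, your proof is complete and agrees in substance with the argument in~\cite{BRS18}.
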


\begin{remark} Note that if $S:=R[x;\sigma,\delta]$ in \autoref{prop:hom*ore} is unital with unit $1_S$, then $(S,*,\alpha)$ is weakly unital with weak unit $1_S$ by \autoref{prop:star-alpha-mult}.
\end{remark}

\subsection{The first Weyl algebra}\label{subsec:weyl-algebra}
The first Weyl algebra $A_1$ over a field $K$ of characteristic zero, from now on just referred to as the Weyl algebra, is the free, associative and unital algebra on two letters $x$ and $y$, $K\langle x,y\rangle$, modulo the commutation relation $[x,y]:=x\cdot y - y\cdot x = 1_{A_1}$, $1_{A_1}=1_Ky^0x^0$ being the unit element in $A_1$. It may be exhibited as the associative and unital iterated Ore extension $K[y][x;\mathrm{id}_{K[y]},\mathrm{d}/\mathrm{d}y]$ where $\mathrm{d}/\mathrm{d}y$ is the ordinary derivative on $K[y]$ (see e.g. \cite{GW04}, also giving a nice introduction to the theory of (associative) Ore extensions). Furthermore, $A_1$ is a classical example of a non-commutative domain, and as a vector space it has a basis $\{y^ix^j\colon i,j\in\mathbb{N}\}$. The fact that $A_1$ does not contain any zero divisors implies that any nonzero endomorphism $f$ on $A_1$ is unital, since $f(1_{A_1})=f(1_{A_1})\cdot f(1_{A_1})\iff f(1_{A_1})\cdot(1_{A_1}-f(1_{A_1}))=0\implies f(1_{A_1})=1_{A_1}$.

Littlewood~\cite{Lit33} proved that $A_1$ is simple when $K=\mathbb{R}$ and $K=\mathbb{C}$, and Hirsch~\cite{Hir37} then generalized this to when $K$ is an arbitrary field of characteristic zero, as well as for higher order Weyl algebras. Sridharan showed in~\cite{Sri61} (cf. Remark 6.2 and Theorem 6.1) that the cohomology of $A_1$ is zero in all positive degrees (see also Theorem 5 in \cite{GG14}). In particular, the vanishing of the cohomology in the first and second degree imply that all derivations are inner and that $A_1$ is formally rigid in the classical sense of Gerstenhaber~\cite{Ger64}. It should be mentioned that there exists however a nontrivial so-called non-commutative deformation, which is due to Pinczon~\cite{Pin97}. In this deformation, the deformation parameter no longer commutes with the original algebra, making it possible to deform $A_1$ into $U(\mathfrak{osp}(1,2))$, the universal enveloping algebra of the orthosymplectic Lie superalgebra (cf. Proposition 4.5 in~\cite{Pin97}). Dixmier further undertook a thorough investigation of $A_1$ in~\cite{Dix68}, and in the same paper he asked whether all its endomorphisms are actually automorphisms? Although this question still remains unanswered, Dixmier managed to describe its automorphism group $\aut_K(A_1)$, which Makar-Limanov then gave a new proof of~\cite{Mak84}, describing the generators of $\aut_K(A_1)$ as follows:

\begin{theorem}[\cite{Mak84}]\label{thm:Mak84} $\aut_K(A_1)$ is generated by \emph{linear automorphisms}, 
\begin{equation*}
x\mapsto ax+by,\quad y\mapsto cx+dy, \quad \begin{vmatrix}a&c\\b&d\end{vmatrix}=1,
\end{equation*}
for $a,b,c,d\in K$, and \emph{triangular automorphisms},
\begin{equation*}
x\mapsto x,\quad y\mapsto y+p(x),\quad p(x)\in K[x].
\end{equation*}
\end{theorem}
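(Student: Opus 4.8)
The plan is to prove this through the Bernstein filtration on $A_1$ together with a degree-descent (Euclidean-type) argument carried out in the associated graded algebra. First I would equip $A_1$ with the filtration in which $\deg x=\deg y=1$ and $\deg(y^ix^j)=i+j$. Because $[x,y]=1_{A_1}$ has degree $0$, strictly below the naive bound $\deg x+\deg y=2$, the associated graded algebra $\mathrm{gr}\,A_1$ is the \emph{commutative} polynomial ring $K[\xi,\eta]$ on the principal symbols $\xi,\eta$ of $x,y$. The commutator descends to a Poisson bracket on $\mathrm{gr}\,A_1$: if $a,b\in A_1$ have degrees $p,q$ and symbols $f,g$, then $[a,b]$ has degree at most $p+q-2$, and its symbol in degree $p+q-2$, when nonzero, equals $\{f,g\}:=\partial_\xi f\,\partial_\eta g-\partial_\eta f\,\partial_\xi g$.

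Let $G\le\aut_K(A_1)$ be the subgroup generated by the linear and triangular automorphisms; note that $G$ is closed under inversion, since the determinant-$1$ linear maps form a group and so do the triangular ones. Given $\phi\in\aut_K(A_1)$, I would set $a=\phi(x)$, $b=\phi(y)$, so that $[a,b]=1_{A_1}$, and induct on $p+q$ with $p=\deg a$, $q=\deg b$. In the base case $p=q=1$, the elements $a,b$ are affine-linear and $[a,b]=1_{A_1}$ forces their linear parts to have determinant $1$; the constant terms are then absorbed by composing with constant-shift triangular maps and the linear coordinate swap, so $\phi\in G$.

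For the descent, suppose $p+q>2$. Then $[a,b]=1_{A_1}$ has degree $0$ while the candidate top symbol lies in the positive degree $p+q-2$, forcing $\{f,g\}=0$. The key lemma is that two homogeneous, Poisson-commuting polynomials in two variables are algebraically dependent, hence each proportional to a power of a common homogeneous polynomial $h$, say $f=c_1h^{p'}$ and $g=c_2h^{q'}$. Assuming $p\ge q$, whenever $q\mid p$ the symbol of $b^{p/q}$ is proportional to $f$, so composing $\phi$ on the right with the triangular-type automorphism $\psi\colon x\mapsto x-\lambda y^{p/q},\ y\mapsto y$ (which lies in $G$ after conjugation by the coordinate swap) cancels the leading symbol of $(\phi\circ\psi)(x)=a-\lambda b^{p/q}$, strictly lowering its degree and hence $p+q$. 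By induction $\phi\circ\psi\in G$, and therefore $\phi=(\phi\circ\psi)\circ\psi^{-1}\in G$.

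The step I expect to be the genuine obstacle is the divisibility hypothesis underlying the descent. A priori the exponents $p',q'$ may be coprime with both exceeding $1$ (for instance $f=\xi^2$, $g=\xi^3$), and then no single power-subtraction lowers either degree. The technical heart of the theorem is to rule this configuration out for honest automorphisms: symbols that are powers of a common $h$ violating the divisibility force $a$ and $b$ to generate too small a subalgebra, contradicting surjectivity of $\phi$ (equivalently, contradicting $[a,b]=1_{A_1}$ upon examining the next filtration layer). Making this exclusion precise — or, alternatively, replacing the crude power-subtraction by a finer Euclidean descent on the Newton data of $a$ and $b$ — is where the real work lies; the remainder is bookkeeping that assembles $\phi$ as an explicit product of linear and triangular automorphisms.
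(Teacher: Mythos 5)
First, a point of comparison: the paper does not prove this statement at all --- it is Makar-Limanov's theorem, imported with the citation \cite{Mak84} and used as a black box (its only role is in the proof of \autoref{prop:morphisms}, where an explicit map is exhibited as a composite of linear and triangular automorphisms). So your proposal can only be judged as a free-standing reconstruction of the classical Dixmier/Makar-Limanov argument. As such, its skeleton is correct: the Bernstein filtration, the identification of $\mathrm{gr}\,A_1$ with the commutative ring $K[\xi,\eta]$ carrying the Poisson bracket as the shadow of the commutator, the lemma that Poisson-commuting homogeneous polynomials are scalar multiples of powers of a common homogeneous polynomial, the treatment of the base case, and the power-subtraction descent under the divisibility hypothesis $q\mid p$ are all sound.

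The gap you flag, however, is not only real; the specific route you sketch for closing it cannot succeed. You suggest the coprime configuration ($f=c_1h^{p'}$, $g=c_2h^{q'}$ with $p',q'$ coprime and both $>1$) might be excluded by ``contradicting $[a,b]=1_{A_1}$ upon examining the next filtration layer.'' But each subtraction $a\mapsto a-\lambda b^{p/q}$ preserves both the relation $[a,b]=1_{A_1}$ and the subalgebra generated by the pair; so if the coprime case could be excluded from $[a,b]=1_{A_1}$ alone, your descent would show that \emph{every} pair $(a,b)$ with $[a,b]=1_{A_1}$ reduces to an affine-linear pair and hence generates $A_1$ --- that is, it would prove the Dixmier conjecture, which this very paper recalls is still open (and which \autoref{cor:hom-dixmier} addresses only in the hom-associative setting). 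Any correct completion must therefore use surjectivity of $\phi$ --- the fact that $a=\phi(x)$ and $b=\phi(y)$ generate $A_1$ --- in an essential way, and this is exactly where Dixmier \cite{Dix68} and Makar-Limanov \cite{Mak84} spend their effort (the latter essentially by passing to the polynomial situation, where the corresponding divisibility statement for genuine automorphisms is the heart of the Jung--van der Kulk theorem). So of your two suggested repairs, only the surjectivity-based one points in a viable direction, and what you call ``the real work'' is not a remainder of bookkeeping but the entire content of the theorem.
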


\subsection{The hom-associative Weyl algebras}\label{subsec:hom-weyl} In \cite{BRS18}, a family of \emph{hom-associative Weyl algebras} $\{A_1^k\}_{k\in K}$ were constructed as generalizations of $A_1$ to the hom-associative setting, including $A_1$ in the member corresponding to $k=0$; $A^0_1=A_1$. Concretely, one finds that an endomorphism $\alpha_k$ commutes with $\mathrm{d}/\mathrm{d}y$ (and $\mathrm{id}_{K[y]}$) on $K[y]$ if and only if it is of the form $\alpha_k(y)=y+k$, $\alpha_k(1_{A_1})=1_{A_1}$, for some arbitrary $k\in K$. Hence, in the light of \autoref{prop:hom*ore}, we have for each $k\in K$ a hom-associative Weyl algebra $A_1^k=(A_1,*,\alpha_k)$, where $\alpha_k(x)=x$. The unit element $1_{A_1}$ of $A_1$ is now acting as a weak unit in the whole of $A_1^k$, where for instance $1_{A_1}*y:=\alpha_k(1_{A_1}\cdot y)=\alpha_k(y)=y+k$. Moreover, if we use the subscript $*$ whenever the multiplication is that defined in \autoref{prop:star-alpha-mult}, so that $(\cdot,\cdot,\cdot)_*$ is the associator and $[\cdot,\cdot]_*$ the commutator in $A_1^k$, we have $[x,y]_*=[x,y]=1_{A_1}$. It was further shown in \cite{BRS18} that $A_1^k$ is simple for all $k\in K$.

\section{Morphisms, derivations, commutation and association relations}\label{sec:morph-assoc-comm}
This section contains results of some of the basic properties of $A_1^k$. 

\begin{lemma}\label{lem:preserved-weak-units}Surjective morphisms of hom-associative algebras preserve weak left (right) units.
\end{lemma}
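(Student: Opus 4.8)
The plan is to prove that a surjective morphism $f\colon (A,\cdot_A,\alpha_A)\to(B,\cdot_B,\alpha_B)$ of hom-associative algebras sends a weak left unit of $A$ to a weak left unit of $B$ (the weak right unit case being entirely symmetric). First I would recall what it means for $f$ to be a morphism of hom-associative algebras: it should be $R$-linear, multiplicative in the sense $f(a\cdot_A a')=f(a)\cdot_B f(a')$, and compatible with the twisting maps, $f\circ\alpha_A=\alpha_B\circ f$. These three properties are exactly the data I expect to need.

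The core computation is short. Suppose $e_l\in A$ is a weak left unit, so that $e_l\cdot_A a=\alpha_A(a)$ for every $a\in A$. I want to show $f(e_l)$ is a weak left unit of $B$, i.e. that $f(e_l)\cdot_B b=\alpha_B(b)$ for every $b\in B$. Here is where surjectivity enters: given an arbitrary $b\in B$, I pick a preimage $a\in A$ with $f(a)=b$, which is possible precisely because $f$ is onto. Then I compute
\begin{equation*}
f(e_l)\cdot_B b = f(e_l)\cdot_B f(a) = f(e_l\cdot_A a) = f(\alpha_A(a)) = \alpha_B(f(a)) = \alpha_B(b),
\end{equation*}
using multiplicativity in the second step, the weak-left-unit property of $e_l$ in the third, and the twisting-map compatibility $f\circ\alpha_A=\alpha_B\circ f$ in the fourth. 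Since $b$ was arbitrary, $f(e_l)$ is a weak left unit of $B$. The weak right unit statement follows by the mirror-image argument, multiplying on the other side, and if $e$ is a weak unit (both left and right) then $f(e)$ is simultaneously a weak left and weak right unit, hence a weak unit.

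There is no serious obstacle here; the only thing that genuinely requires surjectivity is the existence of a preimage for the arbitrary element $b$, so the main point to flag is that without surjectivity one would only control $f(e_l)\cdot_B b$ for $b$ in the image of $f$, which need not be all of $B$. I would therefore make sure the statement of what a morphism preserves about $\alpha$ is made explicit at the outset, since the compatibility $f\circ\alpha_A=\alpha_B\circ f$ is the one nonobvious ingredient that makes the chain of equalities close up.
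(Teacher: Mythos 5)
Your proof is correct and is essentially identical to the paper's: both use surjectivity to write an arbitrary $b\in B$ as $f(a)$, then apply multiplicativity, the weak-left-unit property, and the compatibility $f\circ\alpha_A=\alpha_B\circ f$ in the same chain of equalities, handling the right case by symmetry. No gaps to report.
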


\begin{proof}Let $f\colon A\to B$ be a surjective morphism between two hom-associative algebras with twisting maps $\alpha_A$ and $\alpha_B$, respectively, and $e_A$ a weak left unit of $A$. We show the left case; the right case is analogous. For any element $b\in B$, there is an $a\in A$ such that $b=f(a)$, so $f(e_A)\cdot b=f(e_A)\cdot f(a)=f(e_A\cdot a)=f(\alpha_A(a))=\alpha_B(f(a))=\alpha_B(b)$.
\end{proof}

\begin{proposition}\label{prop:subfield-of-hom-weyl} $K$ embeds as a subfield into $A_1^k$.
\end{proposition}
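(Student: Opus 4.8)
The natural candidate for the embedded copy of $K$ is the set of scalar multiples of the weak unit, $K\cdot 1_{A_1}=\{c\,1_{A_1}\colon c\in K\}$, together with the map $\iota\colon K\to A_1^k$ defined by $\iota(c)=c\,1_{A_1}$. The plan is to verify that $\iota$ is an injective morphism of hom-associative algebras whose image is a subfield of $A_1^k$. Injectivity and additivity are immediate: $\iota$ is clearly $K$-linear, and since $1_{A_1}\neq 0$ while $K$ is a field, $c\,1_{A_1}=0$ forces $c=0$.

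The crucial observation is that $\alpha_k$ fixes $1_{A_1}$, so $\alpha_k(c\,1_{A_1})=c\,1_{A_1}$ for every $c\in K$; in other words, $\alpha_k$ restricts to the identity on $K\cdot 1_{A_1}$. From this everything follows quickly. First, multiplicativity with respect to $*$: for $c,d\in K$ we compute $\iota(c)*\iota(d)=\alpha_k\bigl((c\,1_{A_1})\cdot(d\,1_{A_1})\bigr)=\alpha_k(cd\,1_{A_1})=cd\,1_{A_1}=\iota(cd)$. Second, compatibility with the twisting maps holds trivially, since the source field carries $\mathrm{id}_K$ and $\iota(\mathrm{id}_K(c))=c\,1_{A_1}=\alpha_k(\iota(c))$. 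Hence $\iota$ is a morphism of hom-associative algebras.

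It then remains to see that the image $K\cdot 1_{A_1}$ is a subfield. Because $\alpha_k$ acts as the identity there, the twisting is trivial on this subalgebra, so it is in fact associative, and the weak unit $1_{A_1}$ becomes an honest two-sided multiplicative identity: $1_{A_1}*(c\,1_{A_1})=\alpha_k(c\,1_{A_1})=c\,1_{A_1}$, and symmetrically on the right. Finally, each nonzero element has a multiplicative inverse, since $(c\,1_{A_1})*(c^{-1}\,1_{A_1})=cc^{-1}\,1_{A_1}=1_{A_1}$ for $c\neq 0$. Thus $K\cdot 1_{A_1}$ is a field isomorphic to $K$ via $\iota$, completing the embedding. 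There is no genuine obstacle here; the only point requiring care is that $A_1^k$ is merely weakly unital and non-associative in general, and what makes the argument go through is precisely that restricting to $K\cdot 1_{A_1}$ collapses $\alpha_k$ to the identity, thereby recovering both associativity and a strict unit on this subalgebra.
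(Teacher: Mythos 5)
Your proof is correct and takes essentially the same approach as the paper: both embed $K$ as the scalar multiples of the weak unit, $K\cdot 1_{A_1}=\{a\,y^0x^0\colon a\in K\}$, and check that this map is a morphism of hom-associative algebras (with $\mathrm{id}_K$ as twisting map on the source). The paper leaves the verification as ``readily verified,'' while you spell it out; the key point in both cases is that $\alpha_k$ fixes $1_{A_1}$ and hence restricts to the identity on the image, so the subalgebra is an honest copy of the field $K$.
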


\begin{proof}$K$ is embedded into the associative Weyl algebra $A_1$ by the isomorphism $f\colon K\to  K':=\{ay^0x^0\colon a\in K\}\subseteq A_1$ defined by $f(a)=ay^0x^0$ for any $a\in K$. One readily verifies that the same map embeds $K$ into $A_1^k$, i.e. it is also an isomorphism of the hom-associative algebra $K$, the twisting map being $\mathrm{id}_K$, and the hom-associative subalgebra $K'\subseteq A_1^k$. 
\end{proof}
Just as in the associative case, the above proposition makes it possible to identify $ay^0x^0$ with $a$ for any $a\in K$, something we will do from now on.
\begin{lemma}\label{lem:unique-weak-unit}$1_{A_1}$ is a unique weak left and a unique weak right unit of $A_1^k$.
\end{lemma}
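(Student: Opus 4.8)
The plan is to reduce the weak-unit condition in $A_1^k$ to an honest unit condition in the underlying associative algebra $A_1$, exploiting that the twisting map $\alpha_k$ is invertible. Recall from \autoref{subsec:hom-weyl} that $1_{A_1}$ is already a weak unit of $A_1^k$, so only uniqueness remains to be shown. I would begin by observing that $\alpha_k$ is not merely an endomorphism but in fact an \emph{automorphism} of $A_1$: since $\alpha_k$ fixes $x$ and sends $y\mapsto y+k$, the ring endomorphism fixing $x$ and sending $y\mapsto y-k$ is a two-sided inverse (it is the triangular automorphism of \autoref{thm:Mak84} with $p(x)=-k$). In particular, $\alpha_k$ is injective.

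Next, I would take an arbitrary weak left unit $e_l$, so that by \autoref{def:weak-hom} we have $e_l* a=\alpha_k(a)$ for every $a\in A_1^k$. Unwinding the definition of $*$ from \autoref{prop:star-alpha-mult}, this reads $\alpha_k(e_l\cdot a)=\alpha_k(a)$ for all $a$, where $\cdot$ denotes the associative Weyl-algebra product. Applying the injectivity of $\alpha_k$ cancels the twisting map on both sides and yields $e_l\cdot a=a$ for all $a\in A_1$, that is, $e_l$ is a genuine left unit for the associative multiplication. Specializing to $a=1_{A_1}$ then gives $e_l=e_l\cdot 1_{A_1}=1_{A_1}$, establishing uniqueness of the weak left unit. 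The weak right unit is handled symmetrically: an arbitrary weak right unit $e_r$ satisfies $a* e_r=\alpha_k(a\cdot e_r)=\alpha_k(a)$, which forces $a\cdot e_r=a$ for all $a$, and $a=1_{A_1}$ gives $e_r=1_{A_1}$.

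The only non-formal ingredient here is the injectivity of $\alpha_k$, and that is where I expect the real content of the argument to lie; once it is in hand, the remainder is purely a matter of cancelling $\alpha_k$ and testing against $1_{A_1}$. Notably, the argument does not even invoke the domain property of $A_1$, since the single test element $1_{A_1}$ already pins down $e_l$ and $e_r$, so the proof should be short and direct.
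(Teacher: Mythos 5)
Your proof is correct and rests on the same two ingredients as the paper's: the injectivity of $\alpha_k$ and evaluation at $1_{A_1}$. The only cosmetic difference is that you unwind $*$ to conclude $e_l\cdot a=a$ for all $a$ (making $e_l$ an honest unit of $A_1$) before specializing to $a=1_{A_1}$, whereas the paper specializes immediately and invokes the weak-right-unit property of $1_{A_1}$ — the resulting computation is the same, so this is essentially the paper's argument.
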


\begin{proof} First note that $\alpha_k$ is injective on $A_1^k$; it is injective on $A_1$, and since the underlying vector space is the same for the two algebras, also injective on $A_1^k$. Assume $e_l\in A_1^k$ is a weak left unit. Then $e_l*1_{A_1}=\alpha_k(1_{A_1})$. Since $1_{A_1}$ is a weak right unit, $e_l*1_{A_1}=\alpha_k(e_l)$, so $\alpha_k(e_l)=\alpha_k(1_{A_1})$. Hence $e_l=1_{A_1}$, and analogously for the right case. 
\end{proof}

On $A_1$ we may define partial differential operators by $\frac{\partial}{\partial y} (y^mx^n) := my^{m-1}x^n$, $\frac{\partial}{\partial x} (y^mx^n) := ny^{m}x^{n-1}$ for any $m,n\in\mathbb{N}$, $0y^{-1}x^n$ and $0y^mx^{-1}$ defined to be zero, and extending linearly. If $L$ is some linear operator on $A_1^k$ such that for each $p\in A_1^k$, only finitely many elements $L^ip$ for $i\in\mathbb{N}$ are nonzero, then we may define $e^L$ using ordinary formal power series. The next proposition gives an example of that.

\begin{proposition}[Product and twisting map]\label{prop:deformed-product} $\alpha_k=e^{k\frac{\partial}{\partial y}}$, so for all $p,q\in A_1^k$,
\begin{equation}
p*q=e^{k\frac{\partial}{\partial y}}(p\cdot q).\label{eq:deformed-product}
\end{equation}
\end{proposition}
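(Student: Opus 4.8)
The plan is to prove the identity $\alpha_k=e^{k\frac{\partial}{\partial y}}$ on the basis $\{y^mx^n : m,n\in\mathbb{N}\}$ of $A_1^k$, and then deduce the product formula directly from the definition of $*$. First I would verify that $e^{k\frac{\partial}{\partial y}}$ is well-defined: applying $\frac{\partial}{\partial y}$ to a monomial $y^mx^n$ lowers the $y$-degree by one each time, so $\left(\frac{\partial}{\partial y}\right)^i(y^mx^n)=0$ for all $i>m$, and hence only finitely many terms in the power series are nonzero on each basis element. By linearity the operator $e^{k\frac{\partial}{\partial y}}$ is then defined on all of $A_1^k$, and the remark preceding the proposition applies.

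The heart of the argument is to check that $e^{k\frac{\partial}{\partial y}}$ agrees with $\alpha_k$ on monomials. Recall that $\alpha_k$ is the homogeneous extension of the endomorphism on $K[y]$ sending $y\mapsto y+k$ and fixing $x$, so $\alpha_k(y^mx^n)=(y+k)^mx^n$. On the other hand, I would compute
\begin{equation*}
e^{k\frac{\partial}{\partial y}}(y^mx^n)=\sum_{i\in\mathbb{N}}\frac{k^i}{i!}\left(\frac{\partial}{\partial y}\right)^i(y^mx^n)=\sum_{i=0}^{m}\frac{k^i}{i!}\frac{m!}{(m-i)!}\,y^{m-i}x^n=\sum_{i=0}^{m}\binom{m}{i}k^i y^{m-i}x^n,
\end{equation*}
which is exactly the binomial expansion of $(y+k)^mx^n$. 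Hence $e^{k\frac{\partial}{\partial y}}$ and $\alpha_k$ coincide on every basis monomial, and since both maps are $K$-linear they coincide on all of $A_1^k$, establishing $\alpha_k=e^{k\frac{\partial}{\partial y}}$.

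The product formula then follows immediately. By the construction of $*$ in \autoref{prop:star-alpha-mult}, for all $p,q\in A_1^k$ we have $p*q=\alpha_k(p\cdot q)$, where $\cdot$ is the associative multiplication of $A_1$. Substituting the identity just proved gives $p*q=e^{k\frac{\partial}{\partial y}}(p\cdot q)$, which is \eqref{eq:deformed-product}.

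I expect the only genuine subtlety to be the justification that $e^{k\frac{\partial}{\partial y}}$ is a legitimate operator on $A_1^k$ rather than a purely formal object, together with the bookkeeping that the formal power series truncates termwise on each monomial; once that is settled, the binomial computation is routine and the product formula is an immediate consequence of the definition of $*$.
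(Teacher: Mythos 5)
Your proposal is correct and follows essentially the same route as the paper: expand $\alpha_k$ on monomials via the binomial theorem, identify the result with the formal exponential series $e^{k\frac{\partial}{\partial y}}$, and deduce \eqref{eq:deformed-product} from $p*q=\alpha_k(p\cdot q)$. The only cosmetic difference is that you argue monomial-by-monomial and invoke linearity, while the paper carries out the same computation directly on a general polynomial $\sum_{i,j}p_{ij}y^ix^j$.
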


\begin{proof}Put $p = \sum_{i\in\mathbb{N}}\sum_{j\in\mathbb{N}}p_{ij}y^ix^j$ for some $p_{ij}\in K$. Then, by defining the exponential of the partial differential operator as its formal power series and putting $0y^{i}$ to be zero whenever $i<0$,
\begin{align*}
\alpha_k(p)&=\sum_{i\in\mathbb{N}}\sum_{j\in\mathbb{N}}p_{ij}(y+k)^ix^j=\sum_{i\in\mathbb{N}}\sum_{j\in\mathbb{N}}\sum_{l=0}^ip_{ij}\binom{i}{l}k^ly^{i-l}x^j\\
&=\sum_{i\in\mathbb{N}}\sum_{j\in\mathbb{N}}\sum_{l=0}^ip_{ij}\left(\left(k\frac{\partial}{\partial y}\right)^l\Big/l!\right)y^ix^j=\sum_{i\in\mathbb{N}}\sum_{j\in\mathbb{N}}\sum_{l\in\mathbb{N}}p_{ij}\left(\left(k\frac{\partial}{\partial y}\right)^l\Big/l!\right)y^ix^j\\
&=\sum_{l\in\mathbb{N}}\left(\left(k\frac{\partial}{\partial y}\right)^l\Big/l!\right)\sum_{i\in\mathbb{N}}\sum_{j\in\mathbb{N}}p_{ij}y^ix^j=:e^{k\frac{\partial}{\partial y}}p.
\end{align*}
At last, $p*q:=\alpha_k(p\cdot q)$, and hence \eqref{eq:deformed-product} holds for all $p,q\in A_1^k$.
\end{proof}

\begin{remark}\label{re:inverse}The inverse of $e^{k\frac{\partial}{\partial y}}$ is simply $e^{-k\frac{\partial}{\partial y}}$, so by \eqref{eq:deformed-product}, $p\cdot q= e^{-k\frac{\partial}{\partial y}}(p*q)$.
\end{remark}

\begin{corollary}\label{cor:zero-div}There are no zero divisors in $A_1^k$.
\end{corollary}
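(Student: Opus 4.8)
The plan is to reduce the claim to the well-known fact that the associative Weyl algebra $A_1$ is a domain, transferring the statement across the shared underlying vector space by means of the twisting map. The crucial structural input is \autoref{prop:deformed-product}, which expresses the deformed product as $p*q = \alpha_k(p\cdot q)$ with $\alpha_k = e^{k\frac{\partial}{\partial y}}$.

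First I would recall from \autoref{re:inverse} that $\alpha_k$ is invertible on $A_1^k$, with inverse $e^{-k\frac{\partial}{\partial y}}$; in particular $\alpha_k$ is injective, so $\alpha_k(r)=0$ forces $r=0$. Now suppose $p*q=0$ for some $p,q\in A_1^k$. By \autoref{prop:deformed-product} this reads $\alpha_k(p\cdot q)=0$, whence injectivity of $\alpha_k$ yields $p\cdot q=0$, the product now being the ordinary associative multiplication of $A_1$. Since $A_1$ contains no zero divisors (being a non-commutative domain, as recalled in \autoref{subsec:weyl-algebra}), it follows that $p=0$ or $q=0$. As $A_1^k$ and $A_1$ have the same underlying vector space, this is precisely the assertion that $A_1^k$ has no zero divisors.

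I do not expect any genuine obstacle here: the whole content of the corollary is packaged into the observation that the twisting map is a vector-space automorphism intertwining the two products, so that the deformed multiplication inherits the absence of zero divisors directly from the associative one. The only point demanding a moment's care is to keep track of which product a given expression refers to — namely that $p*q$ vanishes in $A_1^k$ exactly when $p\cdot q$ vanishes in $A_1$ — but once \autoref{prop:deformed-product} and the invertibility of $\alpha_k$ from \autoref{re:inverse} are in hand, the conclusion is immediate.
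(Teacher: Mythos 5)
Your proof is correct and follows essentially the same route as the paper's own (very brief) argument: both use \autoref{re:inverse}, i.e.\ the invertibility of $\alpha_k = e^{k\frac{\partial}{\partial y}}$, to transfer $p*q=0$ to $p\cdot q=0$ and then invoke the fact that $A_1$ is a domain. You simply spell out the details that the paper leaves implicit.
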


\begin{proof}Using \autoref{re:inverse}, $A_1^k$ cannot contain any zero divisors since $A_1$ does not.
\end{proof}

\begin{corollary}[Commutation relations]\label{cor:comm-rel}For any polynomial $p(x,y) \in A_1^k$,
\begin{align}
\left[x,p(x,y)\right]_*&=e^{k\frac{\partial}{\partial y}}[x,p(x,y)]=\frac{\partial}{\partial y} p(x,y+k),\label{eq:x-commute}\\
\left[p(x,y),y\right]_*&=e^{k\frac{\partial}{\partial y}}[p(x,y),y]=\frac{\partial}{\partial x}p(x,y+k). \label{eq:y-commute}
\end{align}
\end{corollary}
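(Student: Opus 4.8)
The plan is to split each of the two asserted chains of equalities into two independent steps: first identify the starred commutator with $\alpha_k$ applied to the ordinary (associative) commutator, and second evaluate that ordinary commutator using the classical commutation relations of the Weyl algebra, finally pushing $\alpha_k = e^{k\frac{\partial}{\partial y}}$ past the differentiation.

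First I would record that, since $a*b=\alpha_k(a\cdot b)$ and $\alpha_k$ is $R$-linear, for all $a,b\in A_1^k$ we have
\[
[a,b]_*=a*b-b*a=\alpha_k(a\cdot b)-\alpha_k(b\cdot a)=\alpha_k([a,b]).
\]
Taking $a=x$, $b=p(x,y)$ and invoking $\alpha_k=e^{k\frac{\partial}{\partial y}}$ from \autoref{prop:deformed-product} gives at once the first equality in \eqref{eq:x-commute}, and the choice $a=p(x,y)$, $b=y$ gives the first equality in \eqref{eq:y-commute}.

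For the second equality in each line the key input is the pair of classical associative commutation relations $[x,p(x,y)]=\frac{\partial}{\partial y}p(x,y)$ and $[p(x,y),y]=\frac{\partial}{\partial x}p(x,y)$ in $A_1$. I would establish these via the notion of inner derivation: the map $[x,\cdot]$ is an (inner) derivation of the associative algebra $A_1$ with $[x,x]=0$ and $[x,y]=1_{A_1}$, so by the Leibniz rule it sends a basis monomial $y^mx^n$ to $my^{m-1}x^n$, which is exactly $\frac{\partial}{\partial y}(y^mx^n)$; linearity then extends the identity to all $p$. Symmetrically, $[\cdot,y]=-[y,\cdot]$ is a derivation with value $1_{A_1}$ on $x$ and $0$ on $y$, so it agrees with $\frac{\partial}{\partial x}$ on monomials and hence on all of $A_1$. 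Combining the two steps, the first equality of \eqref{eq:x-commute} becomes $[x,p]_*=e^{k\frac{\partial}{\partial y}}\frac{\partial}{\partial y}p(x,y)$; using that $e^{k\frac{\partial}{\partial y}}$ is the shift $y\mapsto y+k$ (so $e^{k\frac{\partial}{\partial y}}q(x,y)=q(x,y+k)$ for every $q$, by the computation in \autoref{prop:deformed-product}) and that it commutes with $\frac{\partial}{\partial y}$, I would rewrite it as $\frac{\partial}{\partial y}\bigl(e^{k\frac{\partial}{\partial y}}p(x,y)\bigr)=\frac{\partial}{\partial y}p(x,y+k)$, which is the right-hand side of \eqref{eq:x-commute}. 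The same manipulation, using that $e^{k\frac{\partial}{\partial y}}$ also commutes with $\frac{\partial}{\partial x}$, yields \eqref{eq:y-commute}.

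I expect the only genuinely delicate point to be the verification that the operators $\frac{\partial}{\partial y}$ and $\frac{\partial}{\partial x}$, which were defined purely combinatorially on the ordered basis $\{y^mx^n\}$, really coincide with the inner derivations $[x,\cdot]$ and $[\cdot,y]$. Here one must check that the Leibniz expansion of $[x,y^mx^n]$ collects contributions only from the $y$-factors (because $[x,x]=0$), so that no reordering of the noncommuting generators intervenes; once this is settled, the remainder is routine bookkeeping about the shift operator $e^{k\frac{\partial}{\partial y}}$ commuting with differentiation. Note also that $A_1^k$ and $A_1$ share the same underlying vector space, so the differential operators and the associative commutators are well defined on $A_1^k$ and these identities may be applied there verbatim.
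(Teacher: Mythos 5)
Your proposal is correct and follows essentially the same route as the paper: both factor the starred commutator as $\alpha_k$ applied to the ordinary associative commutator, reduce to the classical relations $[x,p]=\frac{\partial}{\partial y}p$ and $[p,y]=\frac{\partial}{\partial x}p$ in $A_1$, and then push the shift operator $e^{k\frac{\partial}{\partial y}}$ past the partial derivatives. The only (harmless) difference is that you verify the classical relations via the Leibniz rule for the inner derivations $[x,\cdot]$ and $-[y,\cdot]$ evaluated on the generators, whereas the paper computes $[x,y^mx^n]$ and $[y^mx^n,y]$ directly from the Ore-extension multiplication formula.
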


\begin{proof}In $A_1$, we have $[x,y^mx^n]=x\cdot y^mx^n-y^mx^n\cdot x = \sum_{i\in\mathbb{N}}\binom{1}{i}\frac{\partial^{1-i}y^m}{\partial y^{1-i}}x^{n+i}-y^mx^{n+1}=my^{m-1}x^n$ for any $m,n\in\mathbb{N}$, defining $0y^{-1}$ to be zero. By linearity in the second argument, it follows that $[x,p(x,y)]=\frac{\partial}{\partial y}p(x,y)$. By using \autoref{prop:deformed-product},
\begin{align*}
[x,p(x,y)]_*&=x*p(x,y)-p(x,y)*x=\alpha_k(x\cdot p(x,y))-\alpha_k(p(x,y)\cdot x)\\
&=e^{k\frac{\partial}{\partial y}}(x\cdot p(x,y)-p(x,y)\cdot x)=e^{k\frac{\partial}{\partial y}}[x,p(x,y)]=e^{k\frac{\partial}{\partial y}}\frac{\partial}{\partial y}p(x,y)\\
&=\frac{\partial}{\partial y}e^{k\frac{\partial}{\partial y}} p(x,y)=\frac{\partial}{\partial y}p(y+k,x).
\end{align*}
In $A_1$, we also have $\left[y^mx^n,y\right]=y^mx^n\cdot y - y\cdot y^mx^n=y^m\sum_{i\in\mathbb{N}}\binom{n}{i}\frac{\partial^{n-i}y}{\partial y^{n-i}}x^i - y^{m+1}x^n=ny^mx^{n-1}$ for any $m,n\in\mathbb{N}$, defining $0x^{-1}$ to be zero. By linearity in the first argument, it follows that $[p(x,y),y]=\frac{\partial}{\partial x}p(x,y)$. Hence,
\begin{equation*}
[p(x,y),y]_*=e^{k\frac{\partial}{\partial y}}[p(x,y),y]=e^{k\frac{\partial}{\partial y}}\frac{\partial}{\partial x}p(x,y)=\frac{\partial}{\partial x}e^{k\frac{\partial}{\partial y}}p(x,y)=\frac{\partial}{\partial x}p(y+k,x).\qedhere
\end{equation*}
\end{proof}

\begin{proposition}[The commuter]\label{prop:commuter-of-hom-weyl} $C(A_1^k)=K$.
\end{proposition}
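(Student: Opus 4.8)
The plan is to prove the two inclusions $K\subseteq C(A_1^k)$ and $C(A_1^k)\subseteq K$ separately. For the first, I would argue directly from the definition of $*$. Given $a\in K$, identified with $ay^0x^0$ via \autoref{prop:subfield-of-hom-weyl}, and any $b\in A_1^k$, scalars are central in the associative algebra $A_1$, so $a\cdot b=b\cdot a$; since $\alpha_k$ is $K$-linear, $a*b=\alpha_k(a\cdot b)=\alpha_k(b\cdot a)=b*a$, whence $[a,b]_*=0$. As $b$ was arbitrary, $a\in C(A_1^k)$, giving $K\subseteq C(A_1^k)$.

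For the reverse inclusion, the idea is that membership in the commuter forces an element to commute in particular with the two generators $x$ and $y$, and the commutation relations of \autoref{cor:comm-rel} then pin the element down. Let $p=p(x,y)\in C(A_1^k)$. Then $[x,p]_*=0$ and $[p,y]_*=0$, so \autoref{cor:comm-rel} yields
\begin{equation*}
\frac{\partial}{\partial y}p(x,y+k)=0,\qquad \frac{\partial}{\partial x}p(x,y+k)=0.
\end{equation*}
The remaining step is to deduce $p\in K$ from these two polynomial identities. Since the substitution $y\mapsto y+k$ is a bijection on $K[y]$ with inverse $y\mapsto y-k$, the vanishing of $\frac{\partial}{\partial y}p(x,y+k)$ as an element of $A_1^k$ is equivalent to the vanishing of $\partial_y p$ itself, so $p$ is independent of $y$; likewise $\partial_x p=0$, so $p$ is independent of $x$. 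Hence $p$ is a scalar, i.e. $p\in K$, and combining the two inclusions gives $C(A_1^k)=K$.

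I do not expect a genuine obstacle here: the substance of the argument is carried entirely by \autoref{cor:comm-rel}, and the only point requiring a little care is the final deduction, where one must observe that differentiating the \emph{shifted} polynomial and setting it to zero really does force the original $p$ to be free of $x$ and $y$, rather than merely to vanish after the shift. This is precisely where invertibility of $y\mapsto y+k$ is used.
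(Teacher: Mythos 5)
Your proof is correct and takes essentially the same approach as the paper: the easy inclusion $K\subseteq C(A_1^k)$ via $[a,q]_*=\alpha_k([a,q])=0$, and the reverse inclusion by testing against the generators $x$ and $y$ and invoking \autoref{cor:comm-rel}. Your appeal to the invertibility of the shift $y\mapsto y+k$ is exactly the paper's appeal to the injectivity of $\alpha_k=e^{k\frac{\partial}{\partial y}}$ (they are the same map), so the two arguments coincide in substance, differing only in that the paper strips off the twist first and concludes $p\in K[x]$ and then $p\in K$ sequentially.
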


\begin{proof} Let $a\in K$ and $q\in A_1^k$ be arbitrary. Then $[a,q]_*=\alpha_k\left([a,q]\right)=\alpha_k(0)=0$, so $K\subseteq C(A_1^k)$. For any $p\in C(A_1^k)$, $[x,p]_*\stackrel{\eqref{eq:x-commute}}{=}e^{k\frac{\partial}{\partial y}}[x,p]\stackrel{!}{=}0$, which implies $[x,p]=0$. From \autoref{cor:comm-rel}, $[x,p]=\frac{\partial}{\partial y}p$, so $p\in K[x]$. Continuing, $[p,y]_*\stackrel{\eqref{eq:y-commute}}{=}e^{k\frac{\partial}{\partial y}}[p,y]\stackrel{!}{=}0$, which implies $[p,y]=0$. Again,  from \autoref{cor:comm-rel}, $[p,y]=\frac{\mathrm{d}}{\mathrm{d} x}p$, so $p\in K$.
\end{proof}

\begin{corollary}[The center]\label{cor:center} 
\begin{equation*}
Z(A_1^k)=\begin{cases}K&\text{if } k=0,\\ \{0\}&\text{otherwise.}\end{cases}
\end{equation*}
\end{corollary}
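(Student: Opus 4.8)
The plan is to read off the center from its definition together with the value of the commuter just established. Since $Z(A_1^k)=C(A_1^k)\cap N(A_1^k)$ and \autoref{prop:commuter-of-hom-weyl} gives $C(A_1^k)=K$, the problem reduces immediately to computing $K\cap N(A_1^k)$; that is, to deciding which scalars lie in the nucleus. For $k=0$ this is instantaneous: $A_1^0=A_1$ is associative, so $N(A_1^0)=A_1^0$ and hence $Z(A_1^0)=C(A_1^0)=K$.

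For $k\neq 0$ the claim becomes that no nonzero scalar lies in the nucleus, whence $Z(A_1^k)=K\cap N(A_1^k)=\{0\}$, the zero element always lying in the center. Because $N(A_1^k)\subseteq N_l(A_1^k)$, it suffices to exhibit, for each nonzero $a\in K$, a single nonvanishing associator having $a$ in the left slot. I would test $a$ against $1_{A_1}$ and $y$: using $p*q=\alpha_k(p\cdot q)$ from \autoref{prop:deformed-product} together with $\alpha_k(y)=y+k$, one computes $(a*1_{A_1})*y=a(y+k)$ whereas $a*(1_{A_1}*y)=a(y+2k)$, so that $(a,1_{A_1},y)_*=-ak$. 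Since $k\neq 0$ in a field, this vanishes precisely when $a=0$, giving $K\cap N_l(A_1^k)=\{0\}$ and therefore $Z(A_1^k)=\{0\}$.

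The computation itself is routine, so the only point worth flagging is why the two bracketings disagree. Reassociating shifts one application of $\alpha_k$ off the factor $y$ and onto the product, and since for $k\neq 0$ the map $\alpha_k=e^{k\frac{\partial}{\partial y}}$ is a genuine injective, non-identity translation in $y$, the two sides differ by a detectable multiple of $k$. This is exactly the mechanism by which the center collapses precisely when the algebra ceases to be associative, and the main thing to keep straight is how many times $\alpha_k$ acts on each factor under each bracketing, rather than any delicate argument.
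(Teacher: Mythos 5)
Your proof is correct and takes essentially the same approach as the paper: reduce via $Z(A_1^k)=C(A_1^k)\cap N(A_1^k)$ and \autoref{prop:commuter-of-hom-weyl}, settle $k=0$ by associativity, and for $k\neq 0$ rule out nonzero scalars by exhibiting a single nonvanishing associator with the scalar in the left slot. The only difference is the choice of witness: the paper computes $(c,y,y)_*=-2ck^2-cky$, while you compute $(a,1_{A_1},y)_*=-ak$, which is equally valid and in fact slightly simpler.
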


\begin{proof} Recall from \autoref{subsec:general-algebra} that $Z(A_1^k)=C(A_1^k)\cap N(A_1^k)$. When $k=0$, $N(A_1^k)=A_1^k$, and hence $Z(A_1^k)=C(A_1^k)=K$. Assume instead that $k\neq0$, and let $c\in K$ be arbitrary. Then a straightforward calculation yields $(c,y,y)_*=-2ck^2-cky\stackrel{!}{=}0\iff c=0$. On the other hand, $0\in N(A_1^k)$, so $Z(A_1^k)=\{0\}$.
\end{proof}

\begin{proposition}[Power associativity]\label{prop:power-assoc} $A_1^k$ is power associative if and only if $k=0$.
\end{proposition}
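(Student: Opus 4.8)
The plan is to treat the two directions separately, the forward implication being immediate while the reverse one requires an explicit witness. For the ``if'' direction, observe that $A_1^0 = A_1$ is the associative Weyl algebra, so $(a,b,c)_* = 0$ for all $a,b,c$, and in particular $(a,a,a)_* = 0$; hence $A_1^0$ is power associative. For the ``only if'' direction I would argue by contraposition: assuming $k \neq 0$, I will exhibit a single element $a \in A_1^k$ with $(a,a,a)_* \neq 0$.

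The key preliminary step is to extract a workable formula for the associator. Since $\alpha_k$ is a ring endomorphism of $A_1$ with respect to $\cdot$ and $p * q = \alpha_k(p \cdot q)$ by \autoref{prop:deformed-product}, I would repeatedly apply $\alpha_k(p\cdot q) = \alpha_k(p)\cdot\alpha_k(q)$ to get
\begin{equation*}
(a,b,c)_* = (a*b)*c - a*(b*c) = \alpha_k^2(a)\cdot\alpha_k^2(b)\cdot\alpha_k(c) - \alpha_k(a)\cdot\alpha_k^2(b)\cdot\alpha_k^2(c).
\end{equation*}
Specializing to $a=b=c$ collapses this to a commutator taken with respect to the associative product $\cdot$, namely $(a,a,a)_* = \bigl[\alpha_k^2(a)\cdot\alpha_k^2(a),\,\alpha_k(a)\bigr]$.

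With this formula in hand, the remaining and most delicate task is choosing a good witness. The subtlety is that $\alpha_k(a)$ and $\alpha_k^2(a)$ are both $y$-translates $a(x,y+k)$ and $a(x,y+2k)$ of the same polynomial, so many natural candidates force the commutator to vanish: any $a$ that is a polynomial in $x$ alone, a polynomial in $y$ alone, or affine-linear in $x$ and $y$ yields translates that commute in $A_1$, whence $(a,a,a)_* = 0$ and such elements are uninformative. I would therefore take an element genuinely coupling $x$ and $y$, the simplest being $a = xy$, for which $\alpha_k(a) = xy + kx$ and $\alpha_k^2(a) = xy + 2kx$. Writing $B = \alpha_k^2(a)$ and using $\alpha_k(a) = B - kx$, the commutator simplifies via $[B\cdot B,\,\alpha_k(a)] = -k\,[B\cdot B,\,x]$, and a short application of the Leibniz rule for $[\,\cdot\,,x]$ together with the relation $[y,x] = -1$ gives $(xy,xy,xy)_* = 2kx^2y - kx + 4k^2x^2$. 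For $k \neq 0$ the term $-kx$ is nonzero, so this associator does not vanish, completing the contrapositive.

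The main obstacle I anticipate is precisely this choice of witness: the associator formula $\bigl[\alpha_k^2(a)\cdot\alpha_k^2(a),\,\alpha_k(a)\bigr]$ vanishes for every element whose $y$-translates happen to commute, which eliminates all the first candidates one would naturally try; the computation only becomes informative once $a$ mixes $x$ and $y$ so that the relation $[y,x] = -1$ is brought into play.
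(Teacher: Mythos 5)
Your proposal is correct and takes essentially the same approach as the paper: the $k=0$ direction is immediate from associativity, and for $k\neq 0$ the paper likewise exhibits a single explicit witness whose self-associator is nonzero (the paper uses $yx$, you use $xy$, and it derives the value by direct computation rather than via your formula $(a,a,a)_*=\left[\alpha_k^2(a)\cdot\alpha_k^2(a),\alpha_k(a)\right]$). Your computation checks out --- rewritten in the basis $\{y^ix^j\}$ your associator $2kx^2y-kx+4k^2x^2$ equals $2kyx^2+3kx+4k^2x^2$, which is indeed nonzero exactly when $k\neq 0$.
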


\begin{proof} If $k=0$, then $A_1^k$ is associative and hence also power associative. On the other hand, one readily verifies that $(yx,yx,yx)_*=kx+2k^2x^2$, so if $A_1^k$ is power associative, then $k=0$.
\end{proof}

\begin{remark}
Note that due to the proposition above, $A_1^k$ is not left alternative, right alternative, or flexible, let alone associative. 
\end{remark}

In \autoref{subsec:general-algebra} we said that maps of the form $[a,\cdot]\colon A\to A$ for any $a$ in an associative algebra $A$ are derivations called inner derivations, and that such a map need not be a derivation if $A$ is not associative. For a concrete example of this latter fact, one can consider the map $[y^2,\cdot]_*$ in $A_1^k$, which is a derivation if and only if $k=0$. The reason for this failure when $k=0$ is due to the next lemma.

\begin{lemma}\label{lem:derivation-equiv}$\delta$ is a derivation on $A_1^k$ if and only if $\delta$ is a derivation on $A_1$ that commutes with $e^{k\frac{\partial}{\partial y}}$.
\end{lemma}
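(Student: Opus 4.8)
The statement relates derivations of the two algebras $A_1^k$ (product $*$) and $A_1$ (product $\cdot$), which share the same underlying vector space. The plan is to exploit the explicit relation between the two products furnished by \autoref{prop:deformed-product} and \autoref{re:inverse}: writing $\alpha_k=e^{k\frac{\partial}{\partial y}}$, one has $p*q=\alpha_k(p\cdot q)$ and, since $\alpha_k$ is invertible with inverse $\alpha_k^{-1}=e^{-k\frac{\partial}{\partial y}}$, also $p\cdot q=\alpha_k^{-1}(p*q)$. Both directions of the equivalence should fall out of substituting one of these identities into the Leibniz rule and comparing.

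For the forward direction, suppose $\delta$ is a derivation on $A_1^k$, so $\delta(p*q)=\delta(p)*q+p*\delta(q)$ for all $p,q$. First I would check that $\delta$ commutes with $\alpha_k$. The natural route is to recall (or verify) that the twisting map $\alpha_k$ is a morphism of the algebra and that derivations are forced to intertwine with it; concretely, one expects the hom-associative Leibniz rule together with the weak unit to pin down $\delta\circ\alpha_k=\alpha_k\circ\delta$. Once commutation with $\alpha_k=e^{k\frac{\partial}{\partial y}}$ is established, I would rewrite the $*$-Leibniz rule using $p*q=\alpha_k(p\cdot q)$: expanding $\delta(\alpha_k(p\cdot q))=\alpha_k(\delta(p\cdot q))$ on the left and $\alpha_k(\delta(p)\cdot q)+\alpha_k(p\cdot\delta(q))$ on the right, and then applying the injective map $\alpha_k^{-1}$, I recover $\delta(p\cdot q)=\delta(p)\cdot q+p\cdot\delta(q)$, so $\delta$ is a derivation on $A_1$ commuting with $e^{k\frac{\partial}{\partial y}}$.

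For the converse, suppose $\delta$ is a derivation on $A_1$ that commutes with $\alpha_k$. I would compute $\delta(p*q)=\delta(\alpha_k(p\cdot q))=\alpha_k(\delta(p\cdot q))$, use the ordinary Leibniz rule to write this as $\alpha_k(\delta(p)\cdot q+p\cdot\delta(q))=\alpha_k(\delta(p)\cdot q)+\alpha_k(p\cdot\delta(q))=\delta(p)*q+p*\delta(q)$, which is exactly the $*$-Leibniz rule. Note that $R$-linearity of $\delta$ transfers verbatim since the scalar action is the same on both algebras.

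The main obstacle I anticipate is the forward direction's claim that a $*$-derivation must commute with $\alpha_k$. This is where the weak unit $1_{A_1}$ of \autoref{lem:unique-weak-unit} should do the work: evaluating the $*$-Leibniz rule with one argument equal to the weak unit, together with the identity $1_{A_1}*a=\alpha_k(a)$, ought to yield a relation forcing $\delta\circ\alpha_k=\alpha_k\circ\delta$, possibly up to controlling the term $\delta(1_{A_1})$. Making this step clean — rather than simply assuming the commutation as part of the definition of a morphism — is the delicate point; everything else is a direct transport of the Leibniz rule across the invertible twist $\alpha_k$.
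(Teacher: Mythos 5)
Your plan follows the same route as the paper: use the weak unit $1_{A_1}$ to force $\delta\circ\alpha_k=\alpha_k\circ\delta$, then transport the Leibniz rule across the invertible, injective twist $\alpha_k=e^{k\frac{\partial}{\partial y}}$. Your converse direction and your transfer step in the forward direction are exactly the paper's computations and are correct. However, there is a genuine gap at precisely the point you flag and then defer. Evaluating the $*$-Leibniz rule at $1_{A_1}*a$ gives $\delta(\alpha_k(a))=\alpha_k\bigl(\delta(1_{A_1})\cdot a\bigr)+\alpha_k(\delta(a))$, so by injectivity of $\alpha_k$, commutation of $\delta$ with $\alpha_k$ is \emph{equivalent} to $\delta(1_{A_1})\cdot a=0$ for all $a$, i.e.\ to $\delta(1_{A_1})=0$. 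Neither of your suggested routes establishes this: the idea that derivations of a (weakly unital) hom-associative algebra are ``forced to intertwine with the twisting map'' is not a general theorem --- by the computation just given it is the very statement $\delta(1_{A_1})=0$ you need, and nothing forces it in a general weakly unital hom-associative algebra. Note also that the usual unital-algebra trick fails here: since $1_{A_1}$ is only a weak unit, Leibniz at $1_{A_1}*1_{A_1}$ does not give $\delta(1_{A_1})=2\delta(1_{A_1})$ but rather $\delta(1_{A_1})=2\alpha_k(\delta(1_{A_1}))$, an eigenvalue equation rather than an immediate vanishing.

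The paper closes this gap with a computation specific to $A_1^k$: from $1_{A_1}*1_{A_1}=\alpha_k(1_{A_1})=1_{A_1}$ and the $*$-Leibniz rule one gets $\alpha_k(p)=\tfrac{1}{2}p$ for $p=\delta(1_{A_1})$, and since $\alpha_k$ acts on polynomials as the shift $p(x,y)\mapsto p(x,y+k)$, it preserves the coefficient of the highest power of $y$ (for each power of $x$); comparing coefficients from the top down forces $p=0$. Some argument of this kind is unavoidable, and without it your forward direction does not go through; the rest of your proposal is fine.
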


\begin{proof}First, note that $\delta$ is a linear map on $A_1^k$ if and only if it is a linear map on $A_1$,  as the underlying vector space of $A_1^k$ and $A_1$ is the same. Now, let $\delta$ be a derivation on $A_1^k$. We claim that $\delta(1_{A_1})=0$. First, $\delta(1_{A_1}*1_{A_1})=\delta(1_{A_1})*1_{A_1}+1_{A_1}*\delta(1_{A_1})=2\alpha_k(\delta(1_{A_1}))=2e^{k\frac{\partial}{\partial y}}\delta(1_{A_1})$, using that $\alpha_k=e^{k\frac{\partial}{\partial y}}$ from \autoref{prop:deformed-product}. On the other hand, $\delta(1_{A_1}*1_{A_1})=\delta(\alpha_k(1_{A_1}))=\delta(1_{A_1})$.  The equality of the two expressions is then equivalent to the eigenvector problem $e^{k\frac{\partial}{\partial y}}p=\frac{1}{2}p$, where $p=\delta(1_{A_1})$. It turns out it has no solution, which may be seen from solving the equivalent PDE $p+2\left(k\frac{\partial}{\partial y}+\frac{k^2}{2!}\frac{\partial^2}{\partial y^2}+\dots+\frac{k^m}{m!}\frac{\partial^m}{\partial y^m}\right)p=0$. To see this, let us put $p=\sum_{i=0}^m\sum_{j=0}^np_{ij}y^ix^j$ for some $p_{ij}\in K$ and $m,n\in\mathbb{N}$. Then, by comparing coefficients, starting with $p_{mj}$ for some arbitrary $j$ and working our way down to $p_{0j}$, we have that $p_{ij}=0$ for all $i,j\in\mathbb{N}$. Therefore, $\delta(1_{A_1})=0$ as claimed. For arbitrary $q\in A_1^k$, $\delta\left(e^{k\frac{\partial}{\partial y}}q\right)=\delta(\alpha_k(q))=\delta(q*1_{A_1})=\delta(q)*1_{A_1}+q*\delta(1_{A_1})=\delta(q)*1_{A_1}=\alpha_k(\delta(q))=e^{k\frac{\partial}{\partial y}}\delta(q)$, so $\delta$ commutes with $e^{k\frac{\partial}{\partial y}}$. Now, $\alpha_k(\delta(r\cdot s))=e^{k\frac{\partial}{\partial y}}\delta(r\cdot s)=\delta\left(e^{k\frac{\partial}{\partial y}}(r\cdot s)\right)=\delta(\alpha_k(r\cdot s))=\delta(r*s)=\delta(r)*s+r*\delta(s)=\alpha_k(\delta(r)\cdot s)+\alpha_k(r\cdot\delta(s))=\alpha_k(\delta(r)\cdot s + r\cdot\delta(s))$ where $r,s\in A_1$ are arbitrary. By the injectivity of $\alpha_k$, $\delta(r\cdot s)=\delta(r)\cdot s + r\cdot\delta(s)$. Assume now instead that $\delta$ is a derivation on $A_1$ that commutes with $e^{k\frac{\partial}{\partial y}}$, and that $r,s\in A_1^k$. Then, $\delta(r*s)=\delta(\alpha_k(r\cdot s))=\delta\left(e^{k\frac{\partial}{\partial y}}(r\cdot s)\right)=e^{k\frac{\partial}{\partial y}}\delta(r\cdot s)=\alpha_k(\delta(r\cdot s))=\alpha_k(\delta(r)\cdot s + r\cdot\delta(s))=\alpha_k(\delta(r)\cdot s)+\alpha_k(r\cdot\delta(s))=\delta(r)*s+r*\delta(s)$.\qedhere
\end{proof}

\begin{corollary}[Derivations]\label{cor:derivations} $\delta$ is a derivation on $A_1^k$ for $k$ nonzero if and only if $\delta=[cy+p(x),\cdot]=e^{-k\frac{\partial}{\partial y}}[cy+p(x),\cdot]_*$ for some $c\in K$ and $p(x)\in K[x]$.
\end{corollary}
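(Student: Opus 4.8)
The plan is to reduce everything to the associative Weyl algebra $A_1$ via \autoref{lem:derivation-equiv}, and then exploit the known structure of its derivations. By that lemma, $\delta$ is a derivation on $A_1^k$ precisely when $\delta$ is a derivation on $A_1$ that commutes with $\alpha_k=e^{k\frac{\partial}{\partial y}}$. Recall (\autoref{subsec:weyl-algebra}) that the first cohomology of $A_1$ vanishes, so every derivation of $A_1$ is inner; hence I may write $\delta=[a,\cdot]$, using the \emph{associative} commutator in $A_1$, for some $a\in A_1$. The whole problem thus becomes: for which $a\in A_1$ does $[a,\cdot]$ commute with $\alpha_k$?

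Next I would translate this commuting condition into an algebraic condition on $a$. Since $\alpha_k$ is a ring endomorphism it preserves commutators, and it is in fact an automorphism of $A_1$ with inverse $e^{-k\frac{\partial}{\partial y}}$ (\autoref{re:inverse}). Computing, $\alpha_k(\delta(q))=[\alpha_k(a),\alpha_k(q)]$ while $\delta(\alpha_k(q))=[a,\alpha_k(q)]$; as $q$ ranges over $A_1$ the element $\alpha_k(q)$ ranges over all of $A_1$ by surjectivity, so the two agree for all $q$ if and only if $[\alpha_k(a)-a,r]=0$ for every $r\in A_1$, i.e. if and only if $\alpha_k(a)-a\in Z(A_1)$. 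Here I use that $Z(A_1)=K$, which follows from the $k=0$ instance of \autoref{cor:comm-rel}: a central $p$ satisfies $\frac{\partial}{\partial y}p=[x,p]=0$ and $\frac{\partial}{\partial x}p=[p,y]=0$, forcing $p\in K$. So the condition on $a$ is exactly $\alpha_k(a)-a\in K$.

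The key computation is then to solve $\alpha_k(a)-a\in K$. Writing $a=\sum_j q_j(y)x^j$ with $q_j\in K[y]$, one has $\alpha_k(a)-a=\sum_j\bigl(q_j(y+k)-q_j(y)\bigr)x^j$. For each $j\ge 1$ the $x^j$-coefficient must vanish, so $q_j(y+k)=q_j(y)$; since $k\neq 0$ a periodic polynomial is constant, giving $q_j=c_j\in K$. For $j=0$ the term $q_0(y+k)-q_0(y)$ must be a constant, and a degree count (the difference has degree $\deg q_0-1$ when $\deg q_0\ge 1$) forces $\deg q_0\le 1$, so $q_0=cy+c_0$. Collecting the terms yields $a=cy+p(x)$ with $p(x)=c_0+\sum_{j\ge1}c_jx^j\in K[x]$. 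Conversely any such $a$ gives $\alpha_k(a)-a=ck\in K$, so $[cy+p(x),\cdot]$ indeed commutes with $\alpha_k$ and is a derivation on $A_1^k$; this establishes both implications.

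Finally, the two displayed formulas for $\delta$ coincide because $[cy+p(x),q]_*=\alpha_k([cy+p(x),q])$ by definition of $*$, and applying $e^{-k\frac{\partial}{\partial y}}=\alpha_k^{-1}$ (\autoref{re:inverse}) recovers the associative commutator. I expect the main obstacle to be the step identifying the commuting condition with the centrality of $\alpha_k(a)-a$: it hinges on $\alpha_k$ being an automorphism that respects commutators together with $Z(A_1)=K$, after which the periodicity-and-degree argument is routine.
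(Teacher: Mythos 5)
Your proposal is correct, and it takes a genuinely different route from the paper's at the decisive step. Both arguments begin the same way: reduce via \autoref{lem:derivation-equiv} to finding the derivations of $A_1$ that commute with $\alpha_k=e^{k\frac{\partial}{\partial y}}$, and invoke innerness to write $\delta=[a,\cdot]$. The paper then imposes the commuting condition only at the generators, obtaining the eigenvector problems $e^{k\frac{\partial}{\partial y}}[a,x]=[a,x]$ and $e^{k\frac{\partial}{\partial y}}[a,y]=[a,y]$, solves these as PDEs by comparing coefficients to get $\frac{\partial}{\partial y}a,\frac{\partial}{\partial x}a\in K[x]$, deduces $a=cy+p(x)$, and then must run a separate sufficiency check (using that $\alpha_k$ is an endomorphism and that $e^{k\frac{\partial}{\partial y}}a-a=ck$ is a constant). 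You instead convert the full commuting condition, in one stroke, into the invariant statement $\alpha_k(a)-a\in Z(A_1)=K$, using that $\alpha_k$ is a commutator-preserving automorphism, in particular surjective; necessity and sufficiency then both fall out of solving the difference condition $q_j(y+k)=q_j(y)$ for $j\ge 1$ and $q_0(y+k)-q_0(y)\in K$ by degree counting, which in characteristic zero with $k\neq0$ correctly forces $q_j\in K$ and $\deg q_0\le 1$. Your formulation is the more conceptual one: the paper's sufficiency verification is really the observation that $\alpha_k(a)-a$ is a central constant in disguise, and you make that the organizing principle, at the modest cost of needing $Z(A_1)=K$ (which you correctly extract from the $k=0$ case of \autoref{cor:comm-rel}) and the surjectivity of $\alpha_k$, neither of which the paper's necessity step uses. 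The final identification $[cy+p(x),\cdot]=e^{-k\frac{\partial}{\partial y}}[cy+p(x),\cdot]_*$ is handled identically in both proofs via \autoref{re:inverse}.
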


\begin{proof}Recall from \autoref{subsec:weyl-algebra} that all derivations on $A_1$ are inner, i.e. of the form $[q,\cdot]$ for some $q\in A_1$. From \autoref{lem:derivation-equiv}, there is a one-to-one correspondence between the derivations on $A_1^k$ and the derivations on $A_1$ that commute with $e^{k\frac{\partial}{\partial y}}$. Hence, we are looking for $q\in A_1$ such that  $e^{k\frac{\partial}{\partial y}}[q,x]=\left[q,e^{k\frac{\partial}{\partial y}}x\right]=[q,x]$ and $e^{k\frac{\partial}{\partial y}}[q,y]=\left[q,e^{k\frac{\partial}{\partial y}}y\right]=[q,y+k]=[q,y]$. We thus have two eigenvector problems of the form $e^{k\frac{\partial}{\partial y}}s=s$ with $s\in\{[q,x], [q,y]\}$. This is equivalent to the PDE $\left(k\frac{\partial}{\partial y}+\frac{k^2}{2!}\frac{\partial^2}{\partial y^2}+\dots+\frac{k^m}{m!}\frac{\partial^m}{\partial y^m}\right)s=0$, and by putting $s=\sum_{i\in\mathbb{N}}\sum_{j\in\mathbb{N}}^ns_{ij}y^ix^j$ for some $s_{ij}\in K$, we see by comparing coefficients that $s=\sum_{j\in\mathbb{N}}s_{0j}x^j$. Now, using that $s\in K[x]$, $[q,x]=-\frac{\partial}{\partial y}q$ and $[q,y]=\frac{\partial}{\partial x}q$ from \autoref{cor:comm-rel}, we get $\frac{\partial}{\partial y}q\in K[x]$ and $\frac{\partial}{\partial x}q\in K[x]$. If we put $q=\sum_{i\in\mathbb{N}}\sum_{j\in\mathbb{N}}q_{ij}y^ix^j$, then the former implies that $q=\sum_{j\in\mathbb{N}}(q_{0j}+q_{1j}y)x^j$, which upon plugging into the second yields $q=q_{10}y+\sum_{j\in\mathbb{N}}q_{0j}x^j$. We also claim that a $q$ of this form is sufficient for fulfilling $e^{k\frac{\partial}{\partial y}}[q,u]=\left[q,e^{k\frac{\partial}{\partial y}}u\right]$ for any $u\in A_1$. First, $e^{k\frac{\partial}{\partial y}} q=kq_{10}+q$. Recalling that $e^{k\frac{\partial}{\partial y}}$ is an endomorphism on $A_1$, $e^{k\frac{\partial}{\partial y}}[q,u]=\left[e^{k\frac{\partial}{\partial y}}q, e^{k\frac{\partial}{\partial y}}u\right]=\left[kq_{10}+q, e^{k\frac{\partial}{\partial y}}u\right]=\left[q, e^{k\frac{\partial}{\partial y}}u\right]$. If $q_{10}:=c$ and $p(x):=\sum_{j\in\mathbb{N}}q_{0j}x^j$, then $q=cy+p(x)$. By \autoref{re:inverse}, $[cy+p(x),\cdot]=e^{-k\frac{\partial}{\partial y}}[cy+p(x),\cdot]_*$.
\end{proof}


\begin{lemma}\label{lem:hom-weyl-homo-iff} $f\colon A_1^k\to A_1^l$ is a homomorphism if and only if $f$ is an endomorphism on $A_1$ such that $e^{l\frac{\partial}{\partial y}}f(x)=f(x)$ and $e^{l\frac{\partial}{\partial y}}f(y)=f(y)+k$.
\end{lemma}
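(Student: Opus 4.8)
The plan is to unwind the definition of a morphism of hom-associative algebras and push everything back to the underlying associative algebra $A_1$, using that $A_1^k$ and $A_1^l$ have the same underlying vector space as $A_1$ and that $\alpha_k=e^{k\frac{\partial}{\partial y}}$ and $\alpha_l=e^{l\frac{\partial}{\partial y}}$ are \emph{invertible} algebra endomorphisms of $A_1$ (\autoref{re:inverse}). Recall that $f$ being a homomorphism of hom-associative algebras means that $f$ is $K$-linear, that it is multiplicative for the deformed products, $f(p*q)=f(p)*f(q)$, and that it intertwines the twisting maps, $f\circ\alpha_k=\alpha_l\circ f$. The content of the lemma is that, modulo unitality, the intertwining relation is equivalent to the two displayed conditions, while multiplicativity for $*$ is equivalent to multiplicativity for the undeformed product $\cdot$.

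For the forward direction I would start from multiplicativity written out via \autoref{prop:deformed-product} as $f\bigl(\alpha_k(p\cdot q)\bigr)=\alpha_l\bigl(f(p)\cdot f(q)\bigr)$, rewrite the left-hand side with the intertwining relation to obtain $\alpha_l\bigl(f(p\cdot q)\bigr)=\alpha_l\bigl(f(p)\cdot f(q)\bigr)$, and cancel the injective map $\alpha_l$; this shows $f$ is a linear, multiplicative endomorphism of $A_1$. Evaluating the intertwining relation on the generators, using $\alpha_k(x)=x$ and $\alpha_k(y)=y+k$, then yields $f(x)=e^{l\frac{\partial}{\partial y}}f(x)$ and $f(y)+k=e^{l\frac{\partial}{\partial y}}f(y)$, which are the two conditions. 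For the converse, given an endomorphism $f$ of $A_1$ satisfying the two conditions, I would first re-establish $f\circ\alpha_k=\alpha_l\circ f$: both composites are algebra endomorphisms of $A_1$, so it suffices to check that they agree on $x$ and $y$, and this agreement is exactly what the two conditions express. Multiplicativity for $*$ then follows by running the forward computation backwards, $f(p*q)=f\bigl(\alpha_k(p\cdot q)\bigr)=\alpha_l\bigl(f(p\cdot q)\bigr)=\alpha_l\bigl(f(p)\cdot f(q)\bigr)=f(p)*f(q)$.

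The one point that needs care is unitality, $f(1_{A_1})=1_{A_1}$, which is what lets me identify $f(y+k)$ with $f(y)+k$ (rather than the a priori weaker $f(y)+k\,f(1_{A_1})$) when passing between the intertwining relation and the two generator conditions in either direction. Since $f$ is multiplicative on $A_1$, the element $f(1_{A_1})$ is idempotent, and as $A_1$ has no zero divisors (\autoref{cor:zero-div}) its only idempotents are $0$ and $1_{A_1}$; the value $0$ forces $f$ to be the zero map, so reading ``endomorphism'' as a unital (equivalently nonzero) map---the case relevant to the morphism results that follow---gives $f(1_{A_1})=1_{A_1}$. I expect this unit bookkeeping, together with the careful use of the invertibility of $\alpha_l$ to pass between the deformed and undeformed products, to be the only genuinely delicate part; once the statement is rephrased in terms of the undeformed product and the two generator conditions, the remaining checks are routine.
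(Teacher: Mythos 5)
Your proposal is correct and takes essentially the same approach as the paper: multiplicativity for the undeformed product is extracted by cancelling the injective map $\alpha_l$, the two displayed conditions come from evaluating $f\circ\alpha_k=\alpha_l\circ f$ on the generators $x$ and $y$, and the converse is obtained by checking that both composites, being endomorphisms of $A_1$, agree on the generators (the paper unrolls this last step as an explicit computation on monomials $y^mx^n$). Your treatment of unitality --- $f(1_{A_1})$ idempotent, no zero divisors, zero map excluded --- is if anything more explicit than the paper's, which simply cites the observation in \autoref{subsec:weyl-algebra} that nonzero endomorphisms of $A_1$ are unital; both proofs tacitly read ``homomorphism'' as nonzero.
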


\begin{proof}Let $f\colon A_1^k\to A_1^l$ be a homomorphism, i.e. a $K$-linear map such that $f\circ\alpha_k = \alpha_l\circ f$ and $f(a*_kb)=f(a)*_lf(b)$ for all $a,b\in A_1^k$. Since we may view the underlying vector space of $A_1^k, A_1^l$, and $A_1$ as the same, we only need to show that $e^{l\frac{\partial}{\partial y}}f(x)=f(x)$, $e^{l\frac{\partial}{\partial y}}f(y)=f(y)+k$, and $f(a\cdot b)=f(a)\cdot f(b)$. The former follows from $f\circ\alpha_k = \alpha_l\circ f$ with $\alpha_k=e^{k\frac{\partial}{\partial y}}$ from \autoref{prop:deformed-product}, together with the fact that $f(1_{A_1})=1_{A_1}$ as mentioned in \autoref{subsec:weyl-algebra}. The latter follows from the fact that $f(a*_kb)=f(\alpha_k(a\cdot b))=\alpha_l(f(a\cdot b))$, whereas $f(a)*_lf(b)=\alpha_l\left(f(a)\cdot f(b)\right)$, and since $\alpha_l$ is injective, $f(a\cdot b)=f(a)\cdot f(b)$. Assume instead that $f$ is an endomorphism on $A_1$ such that $e^{l\frac{\partial}{\partial y}}f(x)=f(x)$ and $e^{l\frac{\partial}{\partial y}}f(y)=f(y)+k$. Then, with $\alpha_l=e^{l\frac{\partial}{\partial y}}$, for any $y^mx^n\in A_1$, 
\begin{align*}
\alpha_l(f(y^mx^n))&=\alpha_l(f^m(y))\alpha_l(f^n(x))=(\alpha_l(f(y)))^m(\alpha_l(f(x)))^n\\
&=(f(\alpha_k(y)))^m(f(\alpha_k(x))))^n=f(\alpha_k^m(y)\alpha_k^n(x))=f(\alpha_k(y^mx^n)),
\end{align*}
so $\alpha_l\circ f=f\circ\alpha_k$. Moreover, for all $a,b\in A_1^k$, we have $f(a*_kb)=f(\alpha_k(a\cdot b))=\alpha_l(f(a\cdot b))=\alpha_l(f(a)\cdot f(b))=f(a)*_lf(b)$.
\end{proof}

\begin{proposition}[Morphisms]\label{prop:morphisms} Any homomorphism $f\colon A_1^k\to A_1^l$ for $k,l\neq0$ is an isomorphism of the form $f(x)=\frac{l}{k}x+c$, $f(y)= \frac{k}{l}y+p(x)$ for some $c\in K$ and $p(x)\in K[x]$. 
\end{proposition}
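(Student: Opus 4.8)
The plan is to start from the characterization in \autoref{lem:hom-weyl-homo-iff}, which reduces the problem to studying an endomorphism $f$ of the associative algebra $A_1$ subject to the two twisting constraints $e^{l\frac{\partial}{\partial y}}f(x)=f(x)$ and $e^{l\frac{\partial}{\partial y}}f(y)=f(y)+k$. The first constraint says $f(x)$ is a fixed point of $e^{l\frac{\partial}{\partial y}}$; expanding $f(x)=\sum_{i,j}d_{ij}y^ix^j$ and using $e^{l\frac{\partial}{\partial y}}(y^ix^j)=(y+l)^ix^j$, the difference $(e^{l\frac{\partial}{\partial y}}-\mathrm{id})(y^ix^j)=\big((y+l)^i-y^i\big)x^j$ has $y$-degree exactly $i-1$ for $i\geq1$ and vanishes for $i=0$. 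Since these have distinct $y$-degrees for distinct $i$, they are linearly independent, and the equation forces $d_{ij}=0$ for all $i\geq1$; hence $f(x)\in K[x]$. This is essentially the eigenvector computation already carried out in \autoref{cor:derivations}, so I would appeal to it.

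For the second constraint I would run the same bookkeeping on $f(y)=\sum_{i,j}c_{ij}y^ix^j$, now matching $(e^{l\frac{\partial}{\partial y}}-\mathrm{id})f(y)=k$. Comparing $x^j$-components: for $j\neq0$ the component must vanish, killing all $c_{ij}$ with $i\geq1$; for $j=0$ the component must equal the constant $k$, and the distinct-degree argument forces $c_{i0}=0$ for $i\geq2$ together with $c_{10}\,l=k$. This yields $f(y)=\frac{k}{l}y+p(x)$ with $p(x)\in K[x]$. Since $k\neq0$ this also shows $f(y)\neq0$, so $f$ is a nonzero endomorphism of $A_1$ and therefore unital, $f(1_{A_1})=1_{A_1}$, as recorded in \autoref{subsec:weyl-algebra}.

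Next I would use that $f$, being an algebra endomorphism of $A_1$, must preserve the defining relation: $[f(x),f(y)]=f([x,y])=f(1_{A_1})=1_{A_1}$. Writing $f(x)=q(x)\in K[x]$ and $f(y)=\frac{k}{l}y+p(x)$, the term $[q(x),p(x)]$ vanishes because polynomials in $x$ commute, and by \autoref{cor:comm-rel} we have $[q(x),y]=\frac{\partial}{\partial x}q(x)$. Hence $1_{A_1}=[f(x),f(y)]=\frac{k}{l}q'(x)$, forcing $q'(x)=\frac{l}{k}$ and therefore $f(x)=\frac{l}{k}x+c$ for some $c\in K$ (using $\operatorname{char}K=0$ to integrate). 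This establishes the claimed form.

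It remains to show $f$ is an isomorphism. Since $f$ is a nonzero homomorphism out of the simple algebra $A_1$, its kernel is a proper two-sided ideal and hence zero, so $f$ is injective. For surjectivity I would observe that $\frac{l}{k}\neq0$ gives $x=\frac{k}{l}\big(f(x)-c\big)\in\operatorname{im}f$, whence also $p(x)\in\operatorname{im}f$ and $y=\frac{l}{k}\big(f(y)-p(x)\big)\in\operatorname{im}f$; as $x,y$ generate $A_1$, the subalgebra $\operatorname{im}f$ is all of $A_1$. Thus $f$ is a bijective homomorphism of hom-associative algebras, and its set-theoretic inverse automatically intertwines the twisting maps and the $*$-products (inverting $\alpha_l\circ f=f\circ\alpha_k$ and $f(a*_kb)=f(a)*_lf(b)$), so $f^{-1}$ is again a homomorphism and $f$ is an isomorphism. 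The main obstacle, and the only genuinely computational part, is the second twisting constraint: extracting from $e^{l\frac{\partial}{\partial y}}f(y)=f(y)+k$ both that $f(y)$ is affine in $y$ and that its leading coefficient is exactly $\frac{k}{l}$. Once this and the companion constraint on $f(x)$ are in hand, the relation $[x,y]=1_{A_1}$ and simplicity finish the argument cleanly.
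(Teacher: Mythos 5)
Your derivation of the form of $f$ coincides with the paper's own proof: both start from \autoref{lem:hom-weyl-homo-iff}, both extract $f(x)\in K[x]$ from the first twisting constraint (by the same computation as in \autoref{cor:derivations}) and $f(y)=\frac{k}{l}y+p(x)$ from the second (your linear-independence phrasing of the distinct-$y$-degree argument is just another way of writing the paper's coefficient comparison for the PDE), and both then pin down $f(x)=\frac{l}{k}x+c$ from the preserved relation $[f(x),f(y)]=f([x,y])=1_{A_1}$ via \autoref{cor:comm-rel}. Where you genuinely diverge is the last step, proving that $f$ is an isomorphism. The paper writes $f=g_4\circ g_3\circ g_2\circ g_1$ as an explicit composition of linear and triangular automorphisms and invokes Makar-Limanov's description of $\aut_K(A_1)$ (\autoref{thm:Mak84}) to conclude that $f$ is an automorphism of $A_1$. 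You instead obtain injectivity from simplicity of $A_1$ (the kernel is a proper two-sided ideal, hence zero, using $f\neq 0$, which you neatly justify from the coefficient $\frac{k}{l}\neq 0$) and surjectivity by checking that $x$, then $p(x)$, then $y$ lie in the image, a subalgebra containing the generators. Your route is more elementary and self-contained: it needs only the simplicity of $A_1$ (already quoted in \autoref{subsec:weyl-algebra}) and the fact that $x,y$ generate $A_1$, rather than the full automorphism-group theorem. The paper's route, in exchange, locates these morphisms concretely inside $\aut_K(A_1)$ as products of the standard generators, which is additional structural information. You also make explicit a point the paper leaves implicit, namely that the set-theoretic inverse of a bijective homomorphism of hom-associative algebras again intertwines the twisting maps and the $*$-products, so that bijectivity really does suffice. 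Both arguments are correct.
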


\begin{proof}Let us try to find a homomorphism $f\colon A_1^k \to A_1^l$ when $k$ and $l$ are nonzero. By \autoref{lem:hom-weyl-homo-iff}, this is equivalent to finding an endomorphism $f$ on $A_1$ such that $e^{l\frac{\partial}{\partial y}}f(x)=f(x)$ and $e^{l\frac{\partial}{\partial y}}f(y)=f(y)+k$. The former of the two conditions was considered in the proof of \autoref{cor:derivations}, and it turned out to be equivalent to $f(x)\in K[x]$. The latter is equivalent to the PDE $\left(l\frac{\partial}{\partial y}+\frac{l^2}{2!}\frac{\partial^2}{\partial y^2}+\dots+\frac{l^m}{m!}\frac{\partial^m}{\partial y^m}\right)f(y)=k$. If we put $f(y)=\sum_{i=0}^m\sum_{j=0}^na_{ij}y^ix^j$ for some $a_{ij}\in K$ and $m,n\in\mathbb{N}$, then, by comparing coefficients, $f(y)=\frac{k}{l}y+p(x)$ where $p(x):=\sum_{j=0}^na_{0j}x^j$. Now, note that $f$ is an endomorphism on $A_1$ only if $[f(x),f(y)]=f\left([x,y]\right)=f(1_{A_1})=1_{A_1}$. Calculating the left-hand side, $\left[f(x),\frac{k}{l}y+p(x)\right]=\frac{k}{l}[f(x),y]\stackrel{\eqref{eq:y-commute}}{=}\frac{k}{l}\frac{\mathrm{d}}{\mathrm{d}x}f(x)$, which is equal to $1_{A_1}$ if and only if $f(x)=\frac{l}{k}x+c$ for some $c\in K$. Let us introduce the following functions:
\begin{align*}
g_1(x):=&\frac{l}{k}x+y,&g_2(x):=&x, & g_3(x):=&x-\frac{k}{l}y,&g_4(x):=&x, \\
g_1(y):=&\frac{k}{l}y,&g_2(y):=&y+c,& g_3(y):=&y,& g_4(y):=&y-c+\frac{l}{k}p(x).
\end{align*}
According to \autoref{thm:Mak84}, these are all automorphisms on $A_1$, and moreover, $f=g_4\circ g_3\circ g_2\circ g_1$ since $g_4\circ g_3\circ g_2\circ g_1(x)=\frac{l}{k}x+c=f(x)$ and $g_4\circ g_3\circ g_2\circ g_1(y)=\frac{k}{l}y+p(x)=f(y)$. Hence, $f$ is an automorphism on $A_1$ such that $e^{l\frac{\partial}{\partial y}}f(x)=f(x)$ and $e^{l\frac{\partial}{\partial y}}f(y)=f(y)+k$, and therefore an isomorphism from $A_1^k$ to $A_1^l$.
\end{proof}

\begin{corollary}[Hom-Dixmier]\label{cor:hom-dixmier}Any endomorphism $f$ on $A_1^k$ for $k\neq0$ is an automorphism of the form $f(x)=x+c$ and $f(y)=y+p(x)$ for some $c\in K$ and $p(x)\in K[x]$.
\end{corollary}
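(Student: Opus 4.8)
The plan is to obtain this as an immediate specialization of \autoref{prop:morphisms}. An endomorphism of $A_1^k$ is by definition a homomorphism $f\colon A_1^k\to A_1^k$, which is exactly the case $l=k$ of the morphism classification already established. First I would invoke \autoref{prop:morphisms} with $l=k$ (both nonzero by hypothesis), which guarantees that every such $f$ is an isomorphism of the prescribed form $f(x)=\frac{l}{k}x+c$ and $f(y)=\frac{k}{l}y+p(x)$ for some $c\in K$ and $p(x)\in K[x]$. Substituting $l=k$ collapses the scaling factors, since $\frac{l}{k}=\frac{k}{l}=1$, yielding precisely $f(x)=x+c$ and $f(y)=y+p(x)$.

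It then remains only to upgrade ``isomorphism'' to ``automorphism.'' This is immediate: \autoref{prop:morphisms} already asserts that $f$ is an isomorphism, and since in the endomorphism case the domain and codomain coincide (both equal $A_1^k$), a bijective homomorphism from $A_1^k$ to itself is by definition an automorphism. Thus the two conditions ``endomorphism'' and ``automorphism'' coincide for $A_1^k$ when $k\neq 0$, which is the hom-associative analogue of the Dixmier conjecture.

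I do not expect any genuine obstacle here, as all the substantive work—reducing the homomorphism conditions to the eigenvector/PDE problems via \autoref{lem:hom-weyl-homo-iff}, extracting the form of $f(x)$ and $f(y)$, and assembling $f$ as a composite of Makar-Limanov's generators from \autoref{thm:Mak84} to witness bijectivity—was already carried out in the proof of \autoref{prop:morphisms}. The only point worth stating explicitly is the trivial but conceptually important observation that specializing the two distinct parameters $k$ and $l$ to a single value forces the dilation coefficients to be $1$, so that an endomorphism can neither rescale $x$ nor $y$ but may only translate $x$ by a constant and shift $y$ by a polynomial in $x$.
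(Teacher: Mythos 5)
Your proposal is correct and is exactly the paper's argument: the paper's proof consists of the single sentence that the corollary follows from \autoref{prop:morphisms} with $k=l$. Your additional remarks (the dilation factors $\frac{l}{k}=\frac{k}{l}=1$ collapsing, and an isomorphism with coinciding domain and codomain being an automorphism) simply spell out what that specialization means.
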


\begin{proof}This follows from \autoref{prop:morphisms} with $k=l$.
\end{proof}

\section{One-parameter formal deformations}\label{sec:deform}
\emph{One-parameter formal hom-associative deformations} and \emph{one-parameter formal hom-Lie deformations} were first introduced by Makhlouf and Silvestrov in~\cite{MS10} together with an attempt at describing a compatible cohomology theory in lower degrees. In the multiplicative case, this was later expanded on by Ammar, Ejbehi and Makhlouf in~\cite{AEM11}, and then by Hurle and Makhlouf~\cite{HM18}. Only in this latter paper, treating the multiplicative, hom-associative case, did the cohomology theory include the twisting map $\alpha$ in a natural way. This is indeed essential, as the idea behind these kinds of deformations is to deform not only the multiplication map, or the Lie bracket, but also the twisting map $\alpha$, resulting also in a deformation of the twisted associativity condition and the twisted Jacobi identity, respectively. In the special case when the deformations start from $\alpha$ being the identity map and the multiplication being associative, or the bracket being the Lie bracket, one gets a deformation of an associative algebra into a hom-associative algebra, and in the latter case a deformation of a Lie algebra into a hom-Lie algebra. Perhaps the main motivation for studying these kinds of deformations is that they provide a framework in which some algebras can now be deformed, which otherwise could not when considered as objects of the category of associative algebras, or that of Lie algebras. The first Weyl algebra constitutes such an example; in the classical sense, it is rigid (see e.g. \cite{Sri61,GG14} for a proof of this fact). In this section, we show that the hom-associative Weyl algebras are one-parameter formal hom-associative deformations of the first Weyl algebra, and that they induce formal deformations of the corresponding Lie algebras into hom-Lie algebras, when using the commutator as bracket. Here, we use a slightly more general approach than that given in~\cite{MS10}, replacing vector spaces by modules; this follows our convention in the preliminaries and previous work (cf.~\cite{Bac18,BR18,BRS18}), with the advantage of e.g. being able to treat rings as algebras over the integers. First, if $R$ is an associative, commutative, and unital ring, and $M$ an $R$-module, we denote by $R\llbracket t\rrbracket$ the formal power series ring in the indeterminate $t$, and by $M\llbracket t\rrbracket$ the $R\llbracket t\rrbracket$-module of formal power series in the same indeterminate, but with coefficients in $M$. By \autoref{def:hom-assoc-algebra}, this allows us to define a hom-associative algebra $(M\llbracket t\rrbracket, \cdot_t,\alpha_t)$ over $R\llbracket t\rrbracket$.

\begin{definition}[One-parameter formal hom-associative deformation] A \emph{one-pa\-rameter formal hom-associative deformation} of a hom-associative algebra, $(M,\cdot_0,\alpha_0)$ over $R$, is a hom-associative algebra $(M\llbracket t\rrbracket, \cdot_t,\alpha_t)$ over $R\llbracket t\rrbracket$, where
\begin{equation*}
\cdot_t=\sum_{i\in\mathbb{N}} \cdot_i t^i,\quad \alpha_t=\sum_{i\in\mathbb{N}} \alpha_it^i,
\end{equation*}
and for each $i\in\mathbb{N}$, $\cdot_i\colon M\times M\to M$ is a binary operation linear over $R$ in both arguments, and $\alpha_i\colon M\to M$ an $R$-linear map. We further extend $\cdot_i$ homogeneously to a binary operation linear over $R\llbracket t\rrbracket$ in both arguments, $\cdot_i\colon M\llbracket t\rrbracket\times M\llbracket t\rrbracket\to M\llbracket t\rrbracket$, and $\alpha_i$ to an $R\llbracket t\rrbracket$-linear map $\alpha_i\colon M\llbracket t\rrbracket\to M\llbracket t\rrbracket$.
\end{definition}
Here, and onwards, a homogeneous extension is defined analogously to that of an Ore extension in \autoref{subsec:hom-ore}, so that for any $r,s\in R$, $a,b\in M$, and $i,j,l\in\mathbb{N}$, we have $\alpha_i(rat^j+sbt^l)=r\alpha_i(a)t^j + s\alpha_i(b)t^l$, and similarly for the product $\cdot_i$.

\begin{proposition}\label{prop:weyl-deform} $A_1^k$ is a one-parameter formal hom-associative deformation of $A_1$.
\end{proposition}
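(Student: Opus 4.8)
The plan is to exhibit the hom-associative Weyl algebra $A_1^k$ as a one-parameter formal hom-associative deformation of $A_1$ by choosing the deformation parameter $k$ to play the role of the formal variable $t$. Concretely, I would set $t = k$ (viewing $k$ as a formal indeterminate over $K$, so that the ground ring becomes $K\llbracket k\rrbracket$) and show that both the deformed product $*$ and the twisting map $\alpha_k$ expand as formal power series in $k$ whose zeroth-order terms recover the associative Weyl algebra $A_1$. The crucial input is \autoref{prop:deformed-product}, which gives the closed form $\alpha_k = e^{k\frac{\partial}{\partial y}}$ and $p * q = e^{k\frac{\partial}{\partial y}}(p\cdot q)$; expanding the exponential as its defining power series immediately produces the required series expansions in $k$.

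First I would record the two expansions explicitly. Writing $\alpha_k = \sum_{i\in\mathbb{N}} \alpha_i k^i$, the exponential series gives $\alpha_0 = \mathrm{id}$ and $\alpha_i = \frac{1}{i!}\frac{\partial^i}{\partial y^i}$ for $i\geq 1$, each of which is a well-defined $K$-linear endomorphism of $A_1$ (finitely many terms act nontrivially on any given polynomial, so the operators are genuinely defined, not merely formal). Similarly, writing $*_k = \sum_{i\in\mathbb{N}} \cdot_i\, k^i$, I read off $\cdot_0 = \cdot$ (the associative Weyl product) and $\cdot_i(p,q) = \frac{1}{i!}\frac{\partial^i}{\partial y^i}(p\cdot q)$ for $i\geq 1$. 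Each $\cdot_i$ is $K$-bilinear because $p\cdot q$ is bilinear and $\frac{\partial^i}{\partial y^i}$ is linear. The zeroth-order data $(A_1, \cdot_0, \alpha_0) = (A_1, \cdot, \mathrm{id})$ is exactly the associative Weyl algebra viewed as a hom-associative algebra with twisting map the identity, as required for the base point of the deformation.

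It then remains to check that $(A_1\llbracket k\rrbracket, *_k, \alpha_k)$ genuinely satisfies the hom-associativity axiom of \autoref{def:hom-assoc-algebra} over $K\llbracket k\rrbracket$, namely $\alpha_k(a)*_k(b *_k c) = (a *_k b)*_k \alpha_k(c)$. But this is not a fresh computation: $(A_1, *, \alpha_k)$ is already known to be a hom-associative algebra for every scalar $k\in K$ by \autoref{prop:hom*ore} and the construction in \autoref{subsec:hom-weyl}, and since the identity holds for every specialization of $k$ to a field element and all the maps involved are polynomial (in fact exponential-series) expressions in $k$, it holds identically as an equation of formal power series in $k$. I would phrase this cleanly by invoking \autoref{prop:star-alpha-mult}/\autoref{prop:hom*ore} directly at the level of $K\llbracket k\rrbracket$: since $e^{k\frac{\partial}{\partial y}}$ is an algebra endomorphism of $A_1\llbracket k\rrbracket$ (it commutes with the Weyl multiplication because $\frac{\partial}{\partial y}$ does, as witnessed by $\alpha_k$ being multiplicative in \autoref{subsec:hom-weyl}), the product $a *_k b := \alpha_k(a\cdot b)$ automatically yields a hom-associative algebra.

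I expect the main obstacle to be purely bookkeeping rather than conceptual: one must be careful that the twisting map in the deformation is allowed to carry a nontrivial series expansion (this is precisely the feature of the Makhlouf–Silvestrov framework that makes the deformation nontrivial despite the classical rigidity of $A_1$), and one must confirm that the homogeneous extension of each $\alpha_i$ and $\cdot_i$ to $A_1\llbracket k\rrbracket$ is compatible with the closed forms $e^{k\frac{\partial}{\partial y}}$ and $e^{k\frac{\partial}{\partial y}}(p\cdot q)$. The genuinely substantive point to flag is nontriviality: because $\alpha_1 = \frac{\partial}{\partial y} \neq 0$, the deformation is not the trivial one, and because \autoref{prop:power-assoc} shows $A_1^k$ is non-associative for $k\neq 0$, this deformation leaves the category of associative algebras — which is exactly why it can exist even though $A_1$ is formally rigid in the classical associative sense.
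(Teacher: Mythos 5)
Your proposal is correct and follows essentially the same route as the paper's own proof: set $t:=k$ as a formal indeterminate, use the closed form $\alpha_t=e^{t\frac{\partial}{\partial y}}$ from \autoref{prop:deformed-product}, define the deformed product as $\alpha_t\circ\cdot_0$ on $A_1\llbracket t\rrbracket$, and obtain hom-associativity by invoking \autoref{prop:star-alpha-mult} rather than by a fresh computation. The additional details you supply --- the explicit coefficients $\alpha_i=\frac{1}{i!}\frac{\partial^i}{\partial y^i}$ and $\cdot_i$, the specialization argument as an alternative justification, and the nontriviality remark via $\alpha_1\neq 0$ --- are consistent refinements of the paper's argument, not a different method.
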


\begin{proof}We put $t:=k$, and regard $t$ as an indeterminate of the formal power series $K\llbracket t\rrbracket$ and $A_1\llbracket t\rrbracket$; this gives a deformation $(A_1\llbracket t\rrbracket,\cdot_t,\alpha_t)$ of $(A_1,\cdot_0,\mathrm{id}_{A_1})$, where the latter is $A_1$ in the language of hom-associative algebras, $\cdot_0$ denoting the multiplication in $A_1$. Explicitly, with $\alpha_t=e^{t\frac{\partial}{\partial y}}$ from \autoref{prop:deformed-product}, it is clear that $\alpha_t$ is a formal power series in $t$ by definition, and moreover, $\alpha_0=\mathrm{id}_{A_1}$. Next, we extend $\alpha_t$ linearly over $K\llbracket t\rrbracket$ and homogeneously to all of $A_1\llbracket t\rrbracket$. To define the multiplication $\cdot_t$ in $A_1\llbracket t \rrbracket$, we first extend $\cdot_0\colon A_1\times A_1\to A_1$ homogeneously to a binary operation $\cdot_0\colon A_1\llbracket t\rrbracket\times A_1\llbracket t\rrbracket\to A_1\llbracket t\rrbracket$ linear over $K\llbracket t\rrbracket$ in both arguments, and then simply compose $\alpha_t$ with $\cdot_0$, so that $\cdot_t:=\alpha_t\circ\cdot_0=e^{t\frac{\partial}{\partial y}}\circ\cdot_0$. This is again a formal power series in $t$ by definition, and hom-associativity now follows from \autoref{prop:star-alpha-mult}. 
\end{proof}

From now on, we refer to one-parameter formal hom-associative deformations as just \emph{deformations}.

\begin{definition}[One-parameter formal hom-Lie deformation]A \emph{one-parameter formal hom-Lie deformation} of a hom-Lie algebra $(M,[\cdot,\cdot]_0,\alpha_0)$ over $R$ is a hom-Lie algebra $(M\llbracket t\rrbracket, [\cdot,\cdot]_t,\alpha_t)$ over $R\llbracket t\rrbracket$, where
\begin{equation*}
[\cdot,\cdot]_t=\sum_{i\in\mathbb{N}} [\cdot,\cdot]_i t^i,\quad \alpha_t=\sum_{i\in\mathbb{N}} \alpha_it^i,
\end{equation*}
and for each $i\in\mathbb{N}$, $[\cdot,\cdot]_i\colon M\times M\to M$ is a binary operation linear over $R$ in both arguments, and $\alpha_i\colon M\to M$ an $R$-linear map. We further extend $[\cdot,\cdot]_i$ homogeneously to a binary operation linear over $R\llbracket t\rrbracket$ in both arguments, $[\cdot,\cdot]_i\colon M\llbracket t\rrbracket\times M\llbracket t\rrbracket\to M\llbracket t\rrbracket$, and $\alpha_i$ to an $R\llbracket t\rrbracket$-linear map $\alpha_i\colon M\llbracket t\rrbracket\to M\llbracket t\rrbracket$.
\end{definition}

\begin{remark}Alternativity of $[\cdot,\cdot]_t$ is equivalent to alternativity of $[\cdot,\cdot]_i$ for all $i\in\mathbb{N}$.
\end{remark}

\begin{proposition}\label{prop:weyl-lie-deform}The deformation of $A_1$ into $A_1^k$ induces a one-parameter formal hom-Lie deformation of the Lie algebra of $A_1$ into the hom-Lie algebra of $A_1^k$, when using the commutator as bracket.
\end{proposition}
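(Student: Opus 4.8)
The plan is to take the hom-associative deformation $(A_1\llbracket t\rrbracket, \cdot_t, \alpha_t)$ of \autoref{prop:weyl-deform}, apply the commutator construction of \autoref{prop:commutator-construction} to it, and then check that the resulting object satisfies the defining conditions of a one-parameter formal hom-Lie deformation of the Lie algebra $(A_1, [\cdot,\cdot]_0, \mathrm{id}_{A_1})$ of $A_1$.

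First I would recall that $(A_1\llbracket t\rrbracket, \cdot_t, \alpha_t)$ is a hom-associative algebra over $K\llbracket t\rrbracket$ with $\alpha_t = e^{t\frac{\partial}{\partial y}}$ and $\cdot_t = \alpha_t\circ\cdot_0$, where $\cdot_0$ is the multiplication of $A_1$. Applying \autoref{prop:commutator-construction} to it at once produces a hom-Lie algebra $(A_1\llbracket t\rrbracket, [\cdot,\cdot]_t, \alpha_t)$ over $K\llbracket t\rrbracket$, with $[a,b]_t := a\cdot_t b - b\cdot_t a$. This disposes of bilinearity, alternativity, and the hom-Jacobi identity for free, so no direct verification of these axioms is needed.

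The key step is to observe that the deformed bracket factors through the twisting map. Since $\cdot_t = \alpha_t\circ\cdot_0$ and $\alpha_t$ is $K\llbracket t\rrbracket$-linear, one computes $[a,b]_t = \alpha_t(a\cdot_0 b) - \alpha_t(b\cdot_0 a) = \alpha_t(a\cdot_0 b - b\cdot_0 a) = \alpha_t([a,b]_0)$, where $[\cdot,\cdot]_0$ denotes the ordinary commutator of $A_1$. Substituting $\alpha_t = \sum_{i\in\mathbb{N}}\frac{t^i}{i!}\frac{\partial^i}{\partial y^i}$ then gives the power-series expansion $[\cdot,\cdot]_t = \sum_{i\in\mathbb{N}}[\cdot,\cdot]_i t^i$ with $[\cdot,\cdot]_i := \frac{1}{i!}\frac{\partial^i}{\partial y^i}\circ[\cdot,\cdot]_0$, each of which is bilinear because $[\cdot,\cdot]_0$ is bilinear and $\frac{\partial^i}{\partial y^i}$ is linear; together with the expansion $\alpha_t = \sum_{i\in\mathbb{N}}\alpha_i t^i$ already recorded in \autoref{prop:weyl-deform}, this furnishes the required form, after extending each $[\cdot,\cdot]_i$ and $\alpha_i$ homogeneously to $A_1\llbracket t\rrbracket$.

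Finally I would read off the zeroth-order term: setting $t = 0$ yields $\alpha_0 = \mathrm{id}_{A_1}$ and $[\cdot,\cdot]_0 = \mathrm{id}_{A_1}\circ[\cdot,\cdot]_0$, the Lie bracket of $A_1$, so the undeformed object is precisely the Lie algebra $(A_1, [\cdot,\cdot]_0, \mathrm{id}_{A_1})$. I do not anticipate a genuine obstacle; the only point needing care is the factorization $[\cdot,\cdot]_t = \alpha_t\circ[\cdot,\cdot]_0$, which makes the bookkeeping transparent and reduces everything to the commutator construction applied to the already-established hom-associative deformation.
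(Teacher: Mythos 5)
Your proposal is correct and follows essentially the same route as the paper: both identify the deformed bracket as $\alpha_t\circ[\cdot,\cdot]_0$, expand it as a formal power series in $t$, and obtain the hom-Lie axioms (in particular the hom-Jacobi identity) from \autoref{prop:commutator-construction} applied to the hom-associative deformation of \autoref{prop:weyl-deform}. The only difference is cosmetic ordering: you apply the commutator construction first and then derive the factorization $[a,b]_t=\alpha_t([a,b]_0)$, whereas the paper defines the bracket by that factorization and then invokes the commutator construction, so the two arguments coincide in substance.
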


\begin{proof}Using the deformation of $A_1$ into $A_1^k$ in \autoref{prop:weyl-deform}, we put $t:=k$; this gives a deformation $(A_1\llbracket t\rrbracket, [\cdot,\cdot]_t,\alpha_t)$ of $(A_1,[\cdot,\cdot]_0,\mathrm{id}_{A_1})$, where the latter is the Lie algebra of $A_1$ obtained from the commutator construction with $[\cdot,\cdot]_0$ as the commutator. To see this, we first note that by construction, $\alpha_t$ is the same map as defined in the proof of \autoref{prop:weyl-deform}. Hence, we only need to show that $[\cdot,\cdot]_t$ is a deformation of the commutator $[\cdot,\cdot]_0$, and that the hom-Jacobi identity is satisfied. We first extend  $[\cdot,\cdot]_0\colon A_1\times A_1\to A_1$ homogeneously to a binary operation $[\cdot,\cdot]_0\colon A_1\llbracket t\rrbracket\times A_1\llbracket t\rrbracket\to A_1\llbracket t \rrbracket$ linear over $K\llbracket t\rrbracket$ in both arguments. Next, we define $[\cdot,\cdot]_t\colon A_1\llbracket t\rrbracket\times A_1\llbracket t\rrbracket\to A_1\llbracket t \rrbracket$ as  $\alpha_t\circ[\cdot,\cdot]_0=e^{t\frac{\partial}{\partial y}}[\cdot,\cdot]_0$. The hom-Jacobi identity is satisfied by \autoref{prop:commutator-construction} and the construction of $A_1^k$ given in \autoref{subsec:hom-weyl}.
\end{proof}

\section*{Acknowledgements} We wish to thank Joakim Arnlind for a suggestion on classification, and Sergei Silvestrov for some initial discussions. We would also like to thank the referee for helpful comments on how to improve the manuscript.

\newpage

\end{document}